\numberwithin{equation}{section}
\newcommand{\K}{\mathbb{K}}
\newcommand{\R}{\mathbb{R}}
\newcommand{\C}{\mathbb{C}}
\newcommand{\A}{\mathcal{A}}
\renewcommand{\L}{\mathcal{L}}
\newcommand{\B}{\mathcal{B}}
\renewcommand{\d}{\mathrm{d}}
\renewcommand{\Re}{\operatorname{Re}}
\newcommand{\fra}{\mathfrak{a}}
\renewcommand{\mid}{\, \vert \,}
\newcommand{\MR}{\textit{MR}\,}
\DeclarePairedDelimiter\abs{\lvert}{\rvert}
 \DeclarePairedDelimiter\norm{\lVert}{\rVert}
\newcommand{\z}{\zeta}
\renewcommand{\H}{\mathcal{H}}
\theoremstyle{plain}
\newtheorem{theorem}{Theorem}[section]
\newtheorem{proposition}[theorem]{Proposition}
\newtheorem{lemma}[theorem]{Lemma}
\newtheorem{corollary}[theorem]{Corollary}
\theoremstyle{definition}
\newtheorem{definition}[theorem]{Definition}
\newtheorem{remark}[theorem]{Remark}
\newtheorem{assumption}[theorem]{Assumption}
\begin{document}
\title{ On the right multiplicative perturbation of non-autonomous $L^p$-maximal regularity\footnote {Work  supported by Deutsche Forschungsgemeinschaft  DFG (Grant JA 735/8-1)}}
\author{
Bj\"orn Augner, Birgit Jacob, Hafida Laasri
}


\maketitle

\begin{abstract}\label{abstract}
This paper is devoted to the study of $L^p$-maximal regularity for non-autonomous linear evolution equations of the form
\begin{equation*}\label{Multi-pert1-diss-non}
  \dot u(t)+A(t)B(t)u(t)=f(t)\ \ t\in[0,T],\ \
 u(0)=u_0.
  \end{equation*}
where $\{A(t),\ t\in [0,T]\}$ is a family of linear unbounded operators whereas the operators $\{B(t),\ t\in [0,T]\}$ are bounded and invertible. In the Hilbert space situation we consider  operators  $A(t), \ t\in[0,T],$ which arise from  sesquilinear forms. The obtained results are applied to parabolic linear differential equations in one spatial dimension.
\end{abstract}

\bigskip
\noindent
{\bf Key words:} $L^p$-maximal regularity, non-autonomous evolution equation, general parabolic equation\medskip

\noindent
\textbf{MSC:} 35K45, 35K90, 47D06.

\section{Introduction\label{s2}}
We consider the following partial differential equation
\begin{align}\label{dissipative system}
  \frac{\partial u}{\partial t}(t,\zeta)-\frac{\partial}{\partial \zeta}\big(GS\frac{\partial}{\partial \zeta}G^*\mathcal{H}u+P_1\mathcal{H}u\big)(t,\zeta)
  -P_0(\mathcal{H}u)&(t,\zeta)=f(t,\zeta),
\\&\nonumber \zeta\in[0,1], t\geq 0,
  \end{align}
where $S\in L^{\infty}(0,1;\mathbb C^{k\times k})$ and $\H\in L^{\infty}(0,1;\mathbb C^{n\times n})$ are  coercive  multiplication operators on $L^2(0,1;\mathbb C^{k})$ and $L^2(0,1;\mathbb C^{n}),$ respectively,  $G\in \mathbb C^{n\times k}$ and  $P_1, P_0\in L^{\infty}(0,1; \mathbb C^{n\times n}).$  
We write (\ref{dissipative system}) as the abstract Cauchy problem
\begin{equation}\label{Multi-pert1-diss}
  \dot u(t)+A\mathcal{H}u(t)=f(t),\ \
 u(0)=0
  \end{equation}
where the operator $A$ is given by
\begin{equation}\label{ellip+ope}
A:=-\frac{\partial}{\partial \zeta}\left(GS\frac{\partial}{\partial \zeta}G^*+P_1\right)-P_0
\end{equation}
on a domain $D(A)$ which includes appropriate boundary conditions. We aim to characterize boundary conditions such that  $-A\mathcal H$ with
domain $\{ u\in L^2(0,1;\mathbb C^n):\ \mathcal Hu\in D(A)\}$ generates a holomorphic $C_0$-semigroup on $L^2(0,1;\mathbb C^n).$ Furthermore, we investigate whether $-A\mathcal H$ generates a holomorphic $C_0$-semigroup if and only if $-A$ generates a holomorphic $C_0$-semigroup.
We remark, that in \cite[Chapter 6]{Vi}, \cite{Vi-Go-Zw-Ma} (see also \cite{Mi-Zw}) closure relation methods are used to show that $-A\mathcal H$ generates a contraction semigroup for suitable boundary conditions.
\par\noindent  If $S$ and $\mathcal H$ also  depend  on the  time variable $t\in[0,T] ,$  problem (\ref{Multi-pert1-diss}) becomes  a non-autonomous Cauchy problem
\begin{equation}\label{Multi-pert1-diss-non}
  \dot u(t)+A(t)\mathcal{H}(t)u(t)=f(t)\ \ t\in[0,T],\ \
 u(0)=0.
  \end{equation}
We are interested in the well-posedness of (\ref{Multi-pert1-diss-non}) with $L^p$-maximal regularity. Again, as in the autonomous case, it is natural to ask whether  well-posedness  of (\ref{Multi-pert1-diss-non})  with $\mathcal H(t)=I$ implies  well-posedness in the general case.
\par\noindent  Motivated by this example, we start a systematic study of stability of $L^p$-maximal regularity under multiplicative perturbation in a more general situation. First, in Section \ref{MR-Banach spaces} we study  $L^p$-maximal regularity $(p\in(1,\infty))$ for  non-autonomous evolutionary linear Cauchy problems of the form
\begin{equation}\label{Multi-pert1-form}
  \dot u(t)+A(t){B}(t)u(t)=f(t) \ \hbox{a.e.\   on } (0,T),\ \ B(0)u(0)=x_0.
\end{equation}
Here $A:[0,T]\longrightarrow \L(D,X)$ is a strongly measurable function, where $D$ and $X$
are two Banach spaces such that $D\underset{d}{\hookrightarrow}X$, the space $X$ has  the Radon-Nikodým
property and  $B:[0,T]\longrightarrow \L(X).$ Note  that although the domains of the operators $A(t)$ are constant the domains of the perturbed operator $A(t)B(t)$
\[D(A(t)B(t)):=\{ u\in H: B(t)u\in D\}\] may depend on the time variable $t.$
In comparison to the autonomous case, $L^p$-maximal regularity for evolution equations related to
non-autonomous operator families $\{C(t),\ t\in (0,T)\}$ is less well understood.
However, several results have been established. We will mention some of them, distinguishing
between the case where all the operators $C(t)$ have the same domain and the more general
case of time-dependent $D(C(t)).$ In the latter situation Hieber and Monniaux \cite{Hi-Mo1, Hi-Mo2} and Portal and Strkalj \cite{Po-St}  proved $L^p$-maximal regularity, if all operators $C(t)$ have the $L^p$-maximal regularity and the family $\{C(t),\ t\in (0,T)\}$
 satisfies the Acquistapace-Terreni condition. However, 
 the Acquistapace-Terreni condition  requires a certain H\"older regularity of $C$ with respect to $t\in[0,T].$
  On the other hand this approach does not only cover the situation with time-dependent domains, but also
   $L^p$-maximal regularity is independent of $p\in (1,\infty)$ in this case \cite{Hi-Mo2}.
   In general, it is not clear whether  $L^q$-maximal regularity of  a family of operator $\{C(t),\ t\in (0,T)\}$ for some $q\in(1,\infty)$
  implies $L^p$-maximal regularity of $\{C(t),\ t\in (0,T)\}$ for all $p\in(1,\infty).$
   Concerning the case where the operators $C(t),\ t\in (0,T),$ have a common domain  $D,$ Pr\"uss and Schnaubelt
   \cite{Pr-Sc} and Amann \cite{Am} proved $L^p$-maximal regularity of $\{C(t),\ t\in (0,T)\}$ under the conditions that $t\mapsto C(t)$ is continuous and
    that each $C(t)$ has $L^p$-maximal regularity. This result has been generalised by  Arendt et al. \cite{ACFP} to   relative
    continuous functions $t\mapsto C(t).$
\par\noindent  Using the results of \cite{ACFP} and following an idea given in \cite{Sch-We} we prove $L^p$-maximal regularity results for
(\ref{Multi-pert1-form}) with initial data $x_0\not=0$ without assuming the Acquistapace-Terreni condition.
\par\noindent\vskip0.5cm Section \ref{MR-Hilbert spaces} is devoted to the case
where the operators  $A(t), \ t\in[0,T],$ arise from  sesquilinear forms $\mathfrak a(t,.,.)$ on a Hilbert space $H$ with a common form domain
$V$ and $B(t), \ t\in[0,T]$ are bounded linear operators on $H.$ {\it
Form methods} or {\it variational methods} give access to results of existence and uniqueness, and regularity results of the solution  in the case of variable domains and
provide  the simplest and  most  efficient way  to study parabolic evolution equations with time-dependent operators on Hilbert spaces. They were developed by T. Kato \cite{Ka} and in different but equivalent language by  J. L. Lions \cite{Lio61}. Recently a generalisation of the classical approach of Kato and Lions has been given
 by W. Arendt and T. ter Elst \cite{Ar-El}. Their approach covers in particular Dirichlet-to-Neumann operators and degenerate equations. In this present work we are concerned with the classical approach by Lions.
\par\noindent
For the case where $B\equiv I$ and $p=2,$ Lions proved $L^2$-maximal regularity of (\ref{Multi-pert1-form}) if $\fra$ is symmetric i.e., $\mathfrak a(t,u,v) = \overline{\mathfrak a(t,v,u)}$ and  $x_0=0$ (respectively  $x_0\in D(A(0))$) provided  $\mathfrak a(\cdot,u,v) \in C^1 [0,T]$ (respectively $\mathfrak a(\cdot,u,v) \in C^2 [0,T]$ and $f\in H^1(0,T;H)$) for all $u,v \in V,$  \cite[p.~65 and p.~94]{Lio61}. Bardos \cite{Bar71} also proved $L^2$-maximal regularity for $x_0\in V$ under the
assumptions that the domains of both $A(t)^{1/2}$ and $A(t)^{*1/2}$ coincide with $V$ and that $\mathcal A(\cdot)^{1/2}$ is continuously
differentiable with values in $\mathcal L(V,V'),$ where $\A(t)\in \mathcal L(V,V')$ is the operator associated with $\fra(t,.,.)$ on $V'.$
For $p\in(1,\infty)$ and $B\equiv I,$  let us mention  a result of Ouhabaz and Spina \cite{OS10} and Ouhabaz  and Haak \cite{OH14}. They proved $L^p$-maximal regularity for forms such that
$\fra(.,u,v) \in C^\alpha[0,T]$ for all $u,v \in V$  and some
$\alpha > \frac 1 2$. The result in  \cite{OS10} concerns the case $u_0=0$ and the one in \cite{OH14} concerns the case $u_0$ in the real-interpolation space
$(H,D(A(0)))_{1/p^*,p}.$
\par\noindent Left multiplicative perturbation by $B$ was recently investigated by Arendt et al. in \cite{ADLO13}.  They proved $L^2$-maximal regularity for
\begin{equation}\label{pert-Arendt}
  \dot u(t)+B(t)A(t)u(t)=f(t) \ \hbox{a.e.  on } (0,T), \ \ u(0)=u_0\in V
\end{equation}
 assuming that the sesquilinear form $\mathfrak a$ can be written as \[\mathfrak a(t,u,v) = \mathfrak a_1(t,u,v) + \mathfrak a_2(t,u,v)\]
where $\fra_1$ is symmetric, continuous,  $H$-elliptic and piecewise Lipschitz-continuous on $[0,T],$
whereas $\mathfrak a_2\colon [0,T] \times V \times H \to \mathbb C$ satisfies $|\mathfrak a_2(t,u,v)|\le M_2\|u\|_V \|v\|$
and $\mathfrak a_2(\cdot,u,v)$ is measurable for all $u\in V$, $v \in H$.
Furthermore, they assume that $B\colon [0,T] \to \mathcal L(H)$ is strongly measurable such that $\|B (t)\|_{\mathcal L(H)} \le \beta_1$ for a.e. $t \in [0,T]$
 and
$0 < \beta_0 \le (B(t)g \mid g)_H$ for $g \in H$, $\|g\|_H=1$, $t \in [0,T].$
\par\noindent In order to prove $L^2$-maximal regularity for the right multiplicative perturbation problem (\ref{Multi-pert1-form}), we need more regularity on $B.$
In addition to the conditions considered in \cite{ADLO13}, listed above, we assume that $ B:[0,T]\to \mathcal L(H)$  is piecewise Lipschitz continuous
 and selfadjoint (i.e.,  $B(t)^*=B(t)$ for all $t\in[0,T]$). Then as in Section \ref{MR-Banach spaces} we deduce  $L^2$-maximal regularity
of (\ref{Multi-pert1-form}) from the one of (\ref{pert-Arendt}).
\par\noindent  Applications to the parabolic evolution equation (\ref{dissipative system}) are presented in Section \ref{Application}.
%
\section{Perturbation  of maximal regularity in  Banach spaces}\label{MR-Banach spaces}
\subsection{Definition and preliminary}
Let  $(D,\Vert .\Vert_D)$ and $(X,\Vert .\Vert)$
be two Banach spaces such that $D\underset{d}{\hookrightarrow}X,$ i.e., $D$ is continuously and densely
embedded into $X.$  Let $A\in\mathcal{L}(D,X),$ $p\in(1,\infty)$ and $T>0$ be fixed.
We say that $A$ has  {\it $L^p$-maximal regularity} if
for every $f\in L^p(0,T; X)$  there exists
a unique function $u$ belonging to the \textit{maximal regularity
space} \[{\MR}(p,X):={\MR}(0,T,p,X)= L^p(0,T; D)\cap W^{1,p}(0,T;X)\] such that
\begin{equation}\label{CPA-0}
   \dot{u}(t)+Au(t)=f(t)\ \  \hbox{a.e. on}\ [0,T],\ \qquad u(0)=0.
\end{equation}

\noindent Recall that $W^{1,p}(0,T;X)\subset
C([0,T];X),$ so that $u(0)=0$ in (\ref{CPA-0}) is well defined. The space $\MR(p,X)$  is a Banach space with the norm
\[\|u\|_{\MR}: =\|u\|_{L^p(0,T;D)}+\|u\|_{ W^{1,p}(0,T ; X)}.\]
\par\noindent $L^p$-maximal regularity for autonomous evolution equations is a well understood property and has been intensively investigated in the literature. In the autonomous case  $L^p$-maximal regularity is independent of the bounded interval $[0,T]$ and of $p\in(1,\infty)$ \cite{Ku-We, Ca-Ve, Sob}. Thus we denote by $\mathcal{MR}$ the set of all  operators
  $A\in \mathcal{L}(D,X)$ having $L^p$-maximal regularity.
  It is well known that if $A$ has $L^p$-maximal regularity then $A$ is closed as unbounded operator on $X$ and
$-A$  generates a holomorphic $C_0$-semigroup $(T(t))_{t \geq 0}$
 on $X$ \cite{A-B 3, Do, Ku-We}. Moreover, in Hilbert spaces an operator $A$ has $L^p$-maximal regularity if and only if $-A$ generates a holomorphic $C_0$-semigroup \cite{DSi}. This equivalence is restricted to  Hilbert spaces \cite{Ka-La}.
The reader may
consult   \cite{Are04, Ku-We} for a survey and further references. \par\noindent Consider the initial value problem
\begin{equation}\label{CPA-x}
   \dot{u}(t)+Au(t)=0\ \  \hbox{a.e. on}\ [0,T],\ \qquad  u(0)=u_0.
\end{equation}
\noindent Assume that $A\in \mathcal{MR}.$ Then (\ref{CPA-x}) has the  unique solution $u(t)=T(t)u_0\in \MR(p,X)$
if and only if  $u_0$ lies in the \textit{trace space}  \[ Tr=\{u(0):\ u\in \MR(p,X)\}\]
 (see \cite{Are04}, \cite{ACFP}). The space $Tr$ is a Banach space with the norm
\[\|x\|_{Tr}:=\inf \left\lbrace \|u\|_{MR}: u(0)=x\right\rbrace.\]
Note that the trace space does neither depend on the interval $[0,T]$
 nor on the choice of the point where the
functions $u\in MR(p,X)$ are evaluated. We also recall that $Tr$ is isomorphic to the
real interpolation space
 $(X,D)_{\frac{1}{p*},p},$ where $\frac{1}{p*}+\frac{1}{p}=1$ and

  \[\MR(p,X)\underset{d}{\hookrightarrow}C([0,T];Tr).\]
%
\par\noindent  Suppose now that the operator $A$ is time-dependent and consider the non-autonomous Cauchy problem associated with $A$
\begin{equation}\label{CPNA-x}
   \dot{u}(t)+A(t)u(t)=f(t)\ \  t\hbox{-a.e. on}\ [0,T],\  u(0)=0.
\end{equation}
The $L^p$-maximal regularity for $(\ref{CPNA-x})$ is defined as follows.
\begin{definition}\label{def-max-reg-nonautonomous}  We say that (\ref{CPNA-x}) has {\it $L^p$-maximal regularity} on the bounded interval $(0,T)$ (and  write $\{A(t),\ t\in (0,T)\}\in \mathcal{MR}(p,X)$) if for each $f\in L^p(0,T;X)$ there exists a unique function $u\in W^{1,p}(0, T; X)$ such that $u(t)\in D(A(t))$ for almost every $t\in(0,T)$ and $t\mapsto A(t)u(t)\in L^p(0,T;X)$  satisfying (\ref{CPNA-x}).
\end{definition}
Assume that $D(A(t))=D$ for almost every $t\in [0,T]$ and  $A:[0,T]\longrightarrow \L(D,X)$ is strongly measurable.
 Recall that the function $A:[0,T]\longrightarrow \L(D,X)$ is {\it relatively
  continuous} (\cite[Definition 2.5]{ACFP}) if for each $t\in[0,T]$ and  all
  $\varepsilon>0$ there exist $\delta> 0,\ \eta\geq
  0$ such that for all $s\in[0,T],\ \vert t-s\vert \leq \delta$
implies that
  \[
  \Vert A(t)x-A(s)x\Vert\leq \varepsilon \Vert x \Vert_D+
  \eta\Vert x\Vert\ \hbox{ for } x\in D.\]
 If A is relatively continuous then $A$ is
 bounded (see \cite[Remark 2.6]{ACFP}).

\par\noindent The following lemma is used in
the next sections and is easy to proof.
\begin{lemma}\label{rel-cont-pert}Let $A: [0,T]\rightarrow \mathcal{L}(D,X)$ be relatively continuous and  $B: [0,T]\rightarrow \mathcal{L}(X)$ be another function. Then the following holds.

\begin{enumerate}
 \item[{\rm a)}] If $B$ is norm continuous, then  the product $BA$ is also relatively continuous.
 \item[{\rm b)}] If  $B$ ist bounded, then $A+B$ is  relatively continuous 	
\end{enumerate}
\end{lemma}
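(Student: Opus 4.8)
The plan is to verify both statements directly from the definition of relative continuity, exploiting that a relatively continuous $A$ is bounded (as noted in the excerpt, via \cite[Remark 2.6]{ACFP}).

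\medskip

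\noindent\textit{Proof of b).} Since $B$ is bounded, say $\|B(t)\|_{\L(X)} \le \beta$ for all $t$, and since the embedding $D \hookrightarrow X$ is continuous, we have $\|B(t)x\| \le \beta \|x\| \le \beta c \|x\|_D$ for all $x \in D$, where $c$ is the embedding constant. Fix $t \in [0,T]$ and $\varepsilon > 0$. By relative continuity of $A$ there are $\delta > 0$, $\eta \ge 0$ with $\|A(t)x - A(s)x\| \le \varepsilon\|x\|_D + \eta\|x\|$ whenever $|t-s| \le \delta$. Then for such $s$,
\[
\|(A(t)+B(t))x - (A(s)+B(s))x\| \le \|A(t)x - A(s)x\| + \|B(t)x\| + \|B(s)x\| \le \varepsilon \|x\|_D + (\eta + 2\beta c)\|x\|_D,
\]
which is not quite the desired form; instead one estimates $\|B(t)x\| + \|B(s)x\| \le 2\beta\|x\|$ and absorbs it into the $\|x\|$-term, giving the bound $\varepsilon\|x\|_D + (\eta + 2\beta)\|x\|$. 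Since $\varepsilon$ was arbitrary, $A+B$ is relatively continuous.

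\medskip

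\noindent\textit{Proof of a).} Here the product is $B(t)A(t) \in \L(D,X)$. Write
\[
B(t)A(t)x - B(s)A(s)x = B(t)\big(A(t)x - A(s)x\big) + \big(B(t) - B(s)\big)A(s)x.
\]
For the first term, using $\|B(t)\|_{\L(X)} \le \beta$ and relative continuity of $A$ at $t$ (with data $\delta_1, \eta$ for a tolerance $\varepsilon/\beta$), we get $\|B(t)(A(t)x - A(s)x)\| \le \beta(\tfrac{\varepsilon}{\beta}\|x\|_D + \eta\|x\|) = \varepsilon\|x\|_D + \beta\eta\|x\|$ for $|t-s| \le \delta_1$. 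For the second term, let $M := \sup_{r} \|A(r)\|_{\L(D,X)} < \infty$, which is finite because $A$ is relatively continuous hence bounded; then $\|(B(t)-B(s))A(s)x\| \le \|B(t)-B(s)\|_{\L(X)} \, M \, \|x\|_D$. By norm continuity of $B$, choose $\delta_2 > 0$ so that $|t-s| \le \delta_2$ implies $\|B(t)-B(s)\|_{\L(X)} \le \varepsilon/M$; then this term is at most $\varepsilon\|x\|_D$. Taking $\delta := \min\{\delta_1,\delta_2\}$ and combining, for $|t-s| \le \delta$ we obtain $\|B(t)A(t)x - B(s)A(s)x\| \le 2\varepsilon\|x\|_D + \beta\eta\|x\|$, and since $\varepsilon$ is arbitrary this proves relative continuity of $BA$.

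\medskip

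\noindent The only subtlety, and the step I would be most careful about, is part a): one must use the \emph{boundedness} of $A$ (uniform bound $M$ on $\|A(r)\|_{\L(D,X)}$) to control the term $(B(t)-B(s))A(s)x$ via norm continuity of $B$ — relative continuity of $A$ alone does not help there. Everything else is a routine triangle-inequality estimate, which is why the lemma is flagged as "easy to proof".
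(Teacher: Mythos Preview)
Your argument is correct. The paper does not actually supply a proof of this lemma---it only remarks that it ``is easy to proof''---so there is nothing to compare against; your direct verification from the definition is exactly the intended routine check. Two small cosmetic points: in part a) you tacitly use that a norm-continuous $B$ on the compact interval $[0,T]$ is bounded (so $\beta<\infty$ exists), which is of course true but worth saying; and in part b) you should delete the false start with the $\|x\|_D$ bound and go straight to absorbing $2\beta\|x\|$ into the $\eta$-term.
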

%

\subsection{Perturbation of $L^p$-maximal regularity }\label{Section-Perturbation I}
 \par Let $X,D$ be the Banach spaces as in the previous section and additionally we assume  that $X$ has the Radon-Nikodým property
.\par\noindent  Let $A :[0,T]\rightarrow \mathcal{L}(D,X)$ be strongly measurable and relatively continuous.
In this section we prove some perturbation results for the problem (\ref{CPNA-x}). Let
$B :[0,T]\rightarrow\mathcal{L}(X)$  such that $B(.)x\in C^1([0,T],X)$  for each  $x\in X,$  the inverse $B(t)^{-1}\in \mathcal{L}(X)$ exists for every $t\in [0,T]$ and   $\sup_{t\in[0,T]}\|B(t)^{-1}\|_{\mathcal{L}(X)}<\infty.$ Consider the following  non-autonomous problem

\begin{equation}\label{Mult-pert-Pbm}
 \dot{u}(t)+A(t)B(t)u(t)=f(t)\ \  \hbox{a.e. on}\ [0,T],\ \qquad  B(0)u(0)=x.
\end{equation}
\newline Here the operators $A(t)B(t)$ are defined on their natural domains, namely \[\mathcal{D}_t:={D}(A(t)B(t))=\{x\in X:\ B(t)x\in D(A(t))\}\]
In contrast to $D(A(t))$ the domains $\mathcal{D}_t$ generally  depend on the time variable. The general question is whether the problem (\ref{Mult-pert-Pbm}) is well posed in $L^p$ with maximal regularity.\par\noindent   By  $\mathfrak{AB}$ we denote the multiplication operator  on $L^p(0,T; X)$ defined by
\begin{equation*}
\begin{aligned}
 (\mathfrak{AB}u)(t)&=A(t)B(t)u(t)\ \  \hbox{ for almost every } t\in[0,T].
\\ D(\mathfrak{AB})&=\{u\in L^p(0,T;X):\ u(t)\in \mathcal{D}_t \hbox{ a.e and }  \mathfrak{AB}u\in L^p(0,T;X)\}
\end{aligned}
 \end{equation*}
  \par\noindent Note that if $A(t)$ is closed for almost every $t\in  [0,T]$ then $(\mathfrak{AB}, D(\mathfrak{AB}))$ is closed. In this case the maximal regularity space $\MR_B(p,X)$ given by
 \[\MR_B(p, X):=\MR_B(0, T, p, X):=D(\mathfrak{AB})\cap W^{1,p}(0, T; X)\]
 is a Banach space with the norm
 \[\|u\|_{MR_B}:=\|\dot u\|_{L^p(0,T;X)}+\|u\|_{L^p(0,T;X)}+\|\mathfrak{AB}u\|_{L^p(0,T;X)}.\]
 For each interval $[a,b]\subset [0,T]$,  we may consider the operator $\mathfrak {AB}$ on $L^p(a,b;X)$ In order to keep notation simple, we do not use different notations here.
 \begin{remark}\label{Radon-Nykodim}
Since  the Banach space $X$ has the Radon-Nikodým, the
space of  absolutely continuous functions on $[0,T]$ with
values in $X$ is the same as the Sobolev space $W^{1,1}(0,T;X)$
and $\frac{d}{dt}u:=\dot u$ coincides with the weak derivative. The function $u$ is in $W^{1,p}(0,T;X)$ if and only if $u
\in W^{1,1}(0,T;X)$ and $\dot u \in L^p(0,T;X)$ (see e.g., \cite[Section 1.2]{ABHN}).
\end{remark}
\par For the following lemma see \cite[Lemma 4.3]{ADLO13}.
\begin{lemma}\label{lemma:lipschitz_continuous_operators}
    Let $B:[0,T]\to \L(X)$ be Lipschitz continuous.
    Then the following holds.
    \begin{enumerate}[label={\rm \alph*)}]
        \item There exists a bounded, strongly measurable function
            $\dot B : [0,T] \to \L(X)$ such that
            \[
                \frac \d {\d t} B(t)x = \dot B(t)x \quad (x \in X)
            \]
            for a.e.\ $t\in [0,T]$ and
            \[
                \norm{\dot B(t)}_{\L(X)} \le L \quad (t \in [0,T])
            \]
            where $L$ is the Lipschitz constant of $B$.
        \item If $ u\in W^{1,p}(0,T;X)$  then
            $Bu := B(.)u(.) \in W^{1,p}(0,T;X)$ and
            \begin{equation*}\label{eq:chain_rule}
                (Bu)\dot{} = \dot B(.)u(.)+ B(.) \dot u(.).
              \end{equation*}
         \item If $ u\in W^{1,p}(0,T;X)$, then
            $B^{-1}u := B^{-1}(.)u(.) \in W^{1,p}(0,T;X)$ and
            \begin{equation*}\label{eq:chain_rule}
                (B^{-1}u)\dot{} = B^{-1}\dot B(.)B^{-1}(.)u(.)+ B^{-1}(.) \dot u(.).
            \end{equation*}
    \end{enumerate}
\end{lemma}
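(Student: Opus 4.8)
The plan is to regard part~a) as the substantive step and to deduce parts~b) and~c) from it by a chain-rule computation together with the characterisation of $W^{1,p}$ recalled in Remark~\ref{Radon-Nykodim}. For a), fix $x\in X$: since $\norm{B(t)x-B(s)x}\le L\abs{t-s}\norm{x}$, the curve $t\mapsto B(t)x$ is Lipschitz with constant $L\norm{x}$, hence, as $X$ has the Radon-Nikodým property, it lies in $W^{1,\infty}(0,T;X)$ and $B(t)x=B(s)x+\int_s^t\psi_x(r)\,\d r$ for some $\psi_x\in L^\infty(0,T;X)$ with $\norm{\psi_x(t)}\le L\norm{x}$ a.e. For $n\in\N$ put $\Delta_n(t):=n\bigl(B((t+\tfrac1n)\wedge T)-B(t)\bigr)\in\L(X)$; then $\norm{\Delta_n(t)}_{\L(X)}\le L$ for all $t,n$, the maps $t\mapsto\Delta_n(t)x$ are strongly measurable, and by Lebesgue differentiation $\Delta_n(t)x\to\psi_x(t)$ for a.e.\ $t$, for each $x$. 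Combining the uniform bound $\norm{\Delta_n(t)}\le L$ with a density argument (cf.\ \cite[Lemma~4.3]{ADLO13}) one produces a null set $N$ and, for $t\notin N$, an operator $\dot B(t)\in\L(X)$ with $\norm{\dot B(t)}\le L$ to which $\Delta_n(t)$ converges strongly; setting $\dot B(t):=0$ on $N$ gives $\dot B(t)x=\psi_x(t)$ a.e., i.e.\ $\tfrac{\d}{\d t}B(t)x=\dot B(t)x$ a.e., and $t\mapsto\dot B(t)x$ is strongly measurable as an a.e.\ limit of such functions.

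For b), let $u\in W^{1,p}(0,T;X)$. By Remark~\ref{Radon-Nykodim} the function $u$ is absolutely continuous with $\dot u\in L^p(0,T;X)$, and since $B(\cdot)$ is bounded and Lipschitz while $u$ is bounded and absolutely continuous, $Bu=B(\cdot)u(\cdot)$ is absolutely continuous. Writing
\[
\tfrac1h\bigl(B(t+h)u(t+h)-B(t)u(t)\bigr)=\tfrac1h\bigl(B(t+h)-B(t)\bigr)u(t+h)+B(t)\,\tfrac1h\bigl(u(t+h)-u(t)\bigr)
\]
and letting $h\to0$ yields $(Bu)\dot{}=\dot B(\cdot)u(\cdot)+B(\cdot)\dot u(\cdot)$ a.e.; the right-hand side is dominated by $L\norm{u(\cdot)}+\bigl(\sup_t\norm{B(t)}\bigr)\norm{\dot u(\cdot)}\in L^p(0,T)$, so $(Bu)\dot{}\in L^p(0,T;X)$ and Remark~\ref{Radon-Nykodim} gives $Bu\in W^{1,p}(0,T;X)$.

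For c), note first that $[0,T]$ is compact, $B(\cdot)$ is continuous and each $B(t)$ is boundedly invertible, so $t\mapsto B(t)^{-1}$ is continuous and $M:=\sup_t\norm{B(t)^{-1}}_{\L(X)}<\infty$; from $B(t)^{-1}-B(s)^{-1}=B(t)^{-1}\bigl(B(s)-B(t)\bigr)B(s)^{-1}$ we get $\norm{B(t)^{-1}-B(s)^{-1}}\le M^2L\abs{t-s}$, so $B^{-1}$ is Lipschitz continuous. Applying a) to $B^{-1}$ gives a bounded strongly measurable $(B^{-1})\dot{}$ with $\tfrac{\d}{\d t}B(t)^{-1}x=(B^{-1})\dot{}(t)x$ a.e.; differentiating $B(t)^{-1}B(t)=I$ by b) yields $(B^{-1})\dot{}(t)=-B(t)^{-1}\dot B(t)B(t)^{-1}$ a.e. Finally, b) applied with $B^{-1}$ in place of $B$ shows that for $u\in W^{1,p}(0,T;X)$ one has $B^{-1}u\in W^{1,p}(0,T;X)$ with $(B^{-1}u)\dot{}=(B^{-1})\dot{}(\cdot)u(\cdot)+B(\cdot)^{-1}\dot u(\cdot)=-B^{-1}(\cdot)\dot B(\cdot)B^{-1}(\cdot)u(\cdot)+B^{-1}(\cdot)\dot u(\cdot)$, which is the claimed formula.

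The main obstacle is entirely in part~a): passing from the a.e.\ differentiability of each vector-valued curve $t\mapsto B(t)x$, where the exceptional null set a priori depends on $x$, to a single operator-valued, strongly measurable function $\dot B$ satisfying the pointwise operator bound. This is exactly where the uniform bound on the difference quotients and the Radon-Nikodým property enter, and one handles it by the density argument indicated above (the case of separable $X$ being immediate, the general case as in \cite[Lemma~4.3]{ADLO13}); parts~b) and~c) are then routine.
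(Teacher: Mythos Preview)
Your argument is correct; the paper itself does not give a proof of this lemma but simply refers to \cite[Lemma~4.3]{ADLO13}, the same source you invoke for the density step in part~a). One minor point on part~c): your computation yields $(B^{-1}u)\dot{}=-B^{-1}\dot B\,B^{-1}u+B^{-1}\dot u$, with a minus sign on the first term that is absent from the formula as stated in the lemma; your version is the correct one (and is indeed the form used later in the proof of Theorem~\ref{per-thm-Banach}), so this is a typographical slip in the statement rather than an error in your derivation.
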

\vskip0.5cm\par  Note that if $A(t)$ is closed for almost every $t\in  [0,T]$ then $\MR(p,X)=B(\MR_B(p,X))$ and for all $u\in \MR_B(p,X)$ and $v\in MR(p,X)$ we have
 \begin{equation}\label{estimation}\|Bu\|_{MR}\leq c_1 \|u\|_{MR_B}\qquad\text{ and } \qquad\|B^{-1}v\|_{MR_B}\leq c_2 \|v\|_{MR} \end{equation}
where  $c_1:=\sup\{\|B\|+L,1)$ and $c_2:=\sup\{\|B^{-1}\|+\|B^{-1}\|^2L,1).$ In particular, if $B=I$ then $MR(p,X)=MR_I(p,X)$ coincide.
\begin{proposition} \label{Proposition-sub-interval-MR} Assume that $\{A(t)B(t), t\in[0,T]\}$ has  $L^p$-maximal regularity on $(0,T')$ for every $T'\in(0,T].$ Then for every $s\in[0,T)$ and every $(f,x_0)\in L^p(s,T;X)\times Tr$ there exists a unique $u\in \MR_B(s, T, p, X)$ such that \begin{equation}\label{Mult-pert-Pbm-Initial-value}
 \dot{u}(t)+A(t)B(t)u(t)=f(t)\ \  \hbox{a.e. on}\ [s,T],\ \qquad   B(s)u(s)=x_0.
\end{equation}
\end{proposition}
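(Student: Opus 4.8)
The plan is to prove the statement in two stages: a \emph{localization} stage, in which I show that the hypothesis already forces $L^p$-maximal regularity with zero initial value on every subinterval $(s,T)$, and a \emph{lifting} stage, in which I reduce the non-zero datum $x_0\in Tr$ to the zero-datum case by subtracting a suitable reference function.

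For the localization stage I would fix $s\in(0,T)$ (for $s=0$ this stage is vacuous, since the hypothesis with $T'=T$ already provides maximal regularity on $(0,T)$) and prove that $\{A(t)B(t),\ t\in(s,T)\}$ has $L^p$-maximal regularity on $(s,T)$; here both auxiliary intervals $(0,s)$ and $(0,T)$ are covered by the hypothesis, as $s,T\in(0,T]$. For existence, given $f\in L^p(s,T;X)$ I extend it by zero to $\tilde f\in L^p(0,T;X)$, use maximal regularity on $(0,T)$ to obtain $\tilde u\in\MR_B(0,T,p,X)$ with $\tilde u(0)=0$ solving $\dot{\tilde u}+\mathfrak{AB}\tilde u=\tilde f$ a.e., and observe that the restriction of $\tilde u$ to $[0,s]$ lies in $\MR_B(0,s,p,X)$ and solves the homogeneous problem with zero initial value; uniqueness of maximal regularity on $(0,s)$ then forces $\tilde u\equiv 0$ on $[0,s]$, so $\tilde u(s)=0$ and $u:=\tilde u|_{[s,T]}$ is the desired solution. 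For uniqueness on $(s,T)$, a solution $z$ of the homogeneous problem with $z(s)=0$ is extended by zero to $[0,s]$; the extension lies in $W^{1,p}(0,T;X)$ — the two branches match at $t=s$ with common value $0$ — and in $\MR_B(0,T,p,X)$, and solves the homogeneous problem on $(0,T)$ with zero initial value, so it vanishes by uniqueness on $(0,T)$, whence $z\equiv 0$.

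For the lifting stage, given $(f,x_0)\in L^p(s,T;X)\times Tr$, I would use that $Tr$ is the trace space of $\MR(s,T,p,X)$ at the left endpoint (the trace space being independent of the interval and of the evaluation point) to pick $w\in\MR(s,T,p,X)=L^p(s,T;D)\cap W^{1,p}(s,T;X)$ with $w(s)=x_0$, and set $u_0:=B(\cdot)^{-1}w(\cdot)$. By (\ref{estimation}) one has $u_0\in\MR_B(s,T,p,X)$ with $B(s)u_0(s)=w(s)=x_0$, and since $B(t)u_0(t)=w(t)\in D$ a.e.\ it follows that $\mathfrak{AB}u_0(t)=A(t)w(t)$, which belongs to $L^p(s,T;X)$ because $A$ is bounded (being relatively continuous, cf.\ \cite[Remark~2.6]{ACFP}) and $w\in L^p(s,T;D)$. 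Hence $g:=f-\dot u_0-\mathfrak{AB}u_0\in L^p(s,T;X)$, and by the localization stage there is a unique $z\in\MR_B(s,T,p,X)$ with $\dot z+\mathfrak{AB}z=g$ a.e.\ and $z(s)=0$; then $u:=u_0+z$ solves (\ref{Mult-pert-Pbm-Initial-value}), and the difference of any two solutions solves the homogeneous problem with zero initial value and hence vanishes, which gives uniqueness.

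The only genuinely delicate point is the localization stage: I will have to make sure that restrictions and zero-extensions of $W^{1,p}$-functions remain in the appropriate $\MR_B$-classes — this is immediate from the pointwise character of the defining conditions together with the matching of boundary values — so that the uniqueness halves of the hypothesis on the shorter intervals $(0,s)$ and $(0,T)$ can be invoked. The lifting stage is then routine; its only real input is the $p$-integrability of $\mathfrak{AB}u_0=A(\cdot)w(\cdot)$, which rests on the boundedness of $A$ coming from relative continuity.
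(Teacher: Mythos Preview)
Your argument is correct and follows essentially the same route as the paper's proof: both subtract a reference function $B^{-1}w$ with $w\in\MR$ and $w(s)=x_0$ to reduce to a zero-initial-value problem on $(s,T)$, then extend the inhomogeneity by zero to $(0,T)$, solve there, and use uniqueness on the short interval $(0,s)$ to see the solution vanishes on $[0,s]$; uniqueness is handled by the same zero-extension trick. The only organizational difference is that you separate the argument into a localization stage and a lifting stage, whereas the paper interleaves the two into a single pass.
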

\begin{proof}
Let $(f,x_0)\in L^p(s,T;X)\times Tr.$ Let $\omega\in \MR(0, T, p, X)$ such that $\omega(0)=x_0.$ Let $\omega_s(t):=\omega(t-s)$ for $t\in[s,T].$ Thus $\bar\omega_s:=B^{-1}\omega_s\in MR_B(s, T, p, X).$
Let $\bar f_s\in L^p(0,T;X)$ defined on $[0,s)$ by $f_s=0$ and by 
\[f_s:=-\dot{\bar{\omega}}_s(.)-A(.)B(.)\bar\omega_s(.) +f(.)
      \hbox{ on } \  [s,T].\]
Denote by $v_s\in MR_B(s, T ,p , X)$  the unique solution of the problem
\begin{equation*}
 \dot{v}_s(t)+A(t)B(t)v_s(t)=\bar f_s(t)\ \  \hbox{a.e. on}\ [0,T],\ \qquad  v_s(0)=0.
\end{equation*}
By $L^p$-maximal regularity and the fact that $\bar f_s=0$ on $(0,s),$ $v_s=0$ on $[0,s].$ Set $u_s(t):=v_s(t)+\bar{\omega}_s(t)$ for $t\in[s,T].$ Then $u_s\in MR_B(s ,T ,p , X)$ and solves (\ref{Mult-pert-Pbm-Initial-value}).
\par\noindent Let $u_1,u_2\in MR_B(s, T, p, X)$ be two solution of (\ref{Mult-pert-Pbm-Initial-value}). Then
\[\bar v
(t):=\left\{%
\begin{array}{ll}
    0 & \hbox{ if } \  0\leq t<s,\\
    u_1-u_2 & \hbox{ if } \  s\leq t\leq T, \\
\end{array}%
\right. \]
is a solution of (\ref{Mult-pert-Pbm})
on $(0,T)$ for inhomogeneity  $f=0$ and $x_0=0.$ Thus by  maximal regularity $v=0.$
\end{proof}
In the following theorem we give a sufficient conditions for  $L^p$-maximal regularity of (\ref{Mult-pert-Pbm}).
\begin{theorem}\label{per-thm-Banach}
Assume that  $B(t)A(t)\in \mathcal{MR}$ for every $t\in [0,T].$
Then the problem $(\ref{Mult-pert-Pbm})$ has $L^p$-maximal regularity on $(0,T')$ for all $T'\in[0,T)$ and  $p\in (1,\infty).$ In particular, for each $(f,x_0)\in L^p(0,T;X)\times Tr$ there exists a unique $u\in \MR_B(p, X)$ satisfying
\begin{equation}\label{per-thm-Banach-3}
 \dot{u}(t)+A(t)B(t)u(t)=f(t)\ \  \hbox{a.e. on}\ [0,T],\ \qquad  B(0)u(0)=x_0.
\end{equation}
Moreover, $B(.)u(.)\in C([0,T];Tr).$
\end{theorem}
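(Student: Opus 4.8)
The plan is to reduce the right multiplicative perturbation problem~\eqref{Mult-pert-Pbm} to a left multiplicative perturbation problem to which known results apply, using the substitution $v=B(\cdot)u(\cdot)$. If $u\in\MR_B(p,X)$ solves $\dot u(t)+A(t)B(t)u(t)=f(t)$ with $B(0)u(0)=x_0$, then $v=Bu\in\MR(p,X)$ by Lemma~\ref{lemma:lipschitz_continuous_operators}~b) (here we use that $B(\cdot)x\in C^1$, in particular $B$ is Lipschitz on the compact interval $[0,T]$), $v(0)=x_0$, and
\[
\dot v(t)=\dot B(t)u(t)+B(t)\dot u(t)=\dot B(t)B(t)^{-1}v(t)+B(t)\bigl(f(t)-A(t)B(t)u(t)\bigr).
\]
Since $A(t)B(t)u(t)=A(t)v(t)$ and $B(t)u(t)=v(t)$, this rearranges to
\[
\dot v(t)+B(t)A(t)v(t)=B(t)f(t)+\dot B(t)B(t)^{-1}v(t),\qquad v(0)=x_0.
\]
Conversely, given a solution $v\in\MR(p,X)$ of this problem, $u:=B^{-1}v\in\MR_B(p,X)$ by Lemma~\ref{lemma:lipschitz_continuous_operators}~c) and solves~\eqref{per-thm-Banach-3}; the two solution maps are inverse to each other by~\eqref{estimation}. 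So $L^p$-maximal regularity of~\eqref{Mult-pert-Pbm} is equivalent to $L^p$-maximal regularity of the transformed left-perturbation problem.

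Next I would set up the transformed problem so that the results on left multiplicative perturbation (the $\mathcal{MR}$-hypothesis on $B(t)A(t)$, together with relative continuity and the results of~\cite{ACFP}, essentially as used in~\cite{ADLO13}) apply. Write $C(t):=B(t)A(t)$, which by hypothesis lies in $\mathcal{MR}$ for each fixed $t$. The family $\{C(t)\}$ has a common domain $D$ and, by Lemma~\ref{rel-cont-pert}~a), is relatively continuous provided $A$ is relatively continuous and $B$ is norm continuous (continuity of $t\mapsto B(t)\in\mathcal L(X)$ follows from $B(\cdot)x\in C^1([0,T],X)$ via the uniform boundedness principle, or simply from Lipschitz continuity). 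Hence by the non-autonomous maximal regularity result of Arendt et al.\ \cite{ACFP} (pointwise $\mathcal{MR}$ plus relative continuity of the common-domain family), the problem $\dot v+C(\cdot)v=g$, $v(0)=0$, has $L^p$-maximal regularity on every subinterval. The term $\dot B(t)B(t)^{-1}v(t)$ is a lower-order perturbation: $t\mapsto \dot B(t)B(t)^{-1}$ is bounded and strongly measurable (Lemma~\ref{lemma:lipschitz_continuous_operators}~a) plus $\sup_t\|B(t)^{-1}\|<\infty$), so it defines a bounded operator on $L^p(0,T;X)$, and it maps $\MR(p,X)$ into $L^p(0,T;X)$; a Neumann-series / fixed-point argument on short intervals, followed by concatenation, absorbs it without destroying maximal regularity. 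This handles $x_0=0$; for general $x_0\in Tr$ one proceeds exactly as in Proposition~\ref{Proposition-sub-interval-MR}, subtracting an $\MR(p,X)$-extension $\omega$ of $x_0$ to reduce to the case of zero initial data and modified right-hand side.

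Finally, uniqueness transfers through the substitution $u\mapsto Bu$ as above, and the statement $B(\cdot)u(\cdot)\in C([0,T];Tr)$ is immediate once we know $v=Bu\in\MR(p,X)$, since $\MR(p,X)\hookrightarrow C([0,T];Tr)$ as recalled in the preliminaries. I expect the main obstacle to be the careful handling of the lower-order term $\dot B(t)B(t)^{-1}v(t)$: one must check that the perturbed problem still has maximal regularity, which requires either a short-time contraction argument (choosing the interval length so that the perturbation has small norm on $\MR\to L^p$, using that $\|v\|_{L^p(0,\tau;X)}$ is controlled by $\|v\|_{W^{1,p}}$ with a constant vanishing as $\tau\to0$ when $v(0)=0$) and then gluing solutions across a partition of $[0,T]$ using Proposition~\ref{Proposition-sub-interval-MR}, or an a priori estimate argument. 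The algebraic identities via Lemma~\ref{lemma:lipschitz_continuous_operators} and the passage $x_0=0\leadsto x_0\in Tr$ are routine; the relative-continuity bookkeeping is exactly Lemma~\ref{rel-cont-pert}.
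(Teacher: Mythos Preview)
Your proposal is correct and follows essentially the same route as the paper: the substitution $v=B(\cdot)u$, reduction to the non-autonomous problem $\dot v+B(t)A(t)v-\dot B(t)B(t)^{-1}v=B(t)f$ with common domain $D$, and an appeal to the relative-continuity maximal regularity result of \cite{ACFP}. The only difference is cosmetic: where you propose to first obtain $\{B(t)A(t)\}\in\mathcal{MR}(p,X)$ from \cite{ACFP} and then absorb the bounded perturbation $\dot B(t)B(t)^{-1}$ by a short-time contraction plus gluing, the paper avoids this extra step by observing that for each fixed $t$ one already has $B(t)A(t)-\dot B(t)B(t)^{-1}\in\mathcal{MR}$ (by the autonomous bounded-perturbation result \cite[Proposition~1.3]{ACFP}), and that the combined family is still relatively continuous by Lemma~\ref{rel-cont-pert}~b); a single application of \cite[Theorem~2.7]{ACFP} then finishes the argument without any fixed-point or concatenation step.
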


\begin{proof} For every fixed $t\in[0,T]$ we apply Proposition 1.3 in \cite{ACFP} to $\tilde{A}=B(t)A(t)$ and  $\tilde{B}(.)=-\dot{B}(t)B(t)^{-1}$ and obtain  $B(t)A(t)-\dot{B}(t)B(t)^{-1}\in \mathcal{MR}$ for every   $t\in[0,T].$  Moreover,  $B(.)A(.)-\dot{B}(.)B(.)^{-1}$ is strongly measurable and relatively continuous by Lemma \ref{rel-cont-pert}.
Thus  Theorem 2.7 in \cite{ACFP} implies that  $B(.)A(.)-\dot{B}(.)B(.)^{-1}\in\mathcal{MR}(p,X).$
 Let now $f\in L^p(0,T;X).$ Let $v\in {MR}(p,X)$ be the unique solution of
 \begin{equation}\label{prb2-proof}\dot{v}+B(t)A(t)v-\dot{B}(t)B(t)^{-1}v=Bf\ \ \ \hbox{a.e. on }\ [0,T],
\ \qquad  v(0)=0\end{equation}
and set $u(t):=B(t)^{-1}v(t)$ for $t\in[0,T].$  Observe that $u(t)\in {D}_t$ a.e. and $ A(\cdot)B(\cdot)u(\cdot)\in L^p(0,T;X)$ since
$v(t)\in D$ and $A(t)B(t)u(t)=A(t)v(t)$   a.e.
From $$\frac{d}{dt}B(t)^{-1}x=-B(t)^{-1}\dot{B}(t)B(t)^{-1}x\ \  (x\in X,\  t\in[0,T])$$ and  since $X$ has the Radon-Nikodým
property, we have that $u$ is absolutely continuous and
\begin{align*}\dot{u}(t)&=\frac{d}{dt}(B(.)^{-1}v)(t)
\\&=-B(t)^{-1}\dot{B}(t)B(t)^{-1}v(t)+B(t)^{-1}\dot{v}(t)
\\&=-B(t)^{-1}\dot{B}(t)B(t)^{-1}v(t)+B(t)^{-1}\big(B(t)f(t)-B(t)A(t)v(t)
+\dot{B}(t)B(t)^{-1}v(t)\big)
\\&=f(t)-A(t)B(t)u(t).
\end{align*}
Thus $u\in \MR_B(p; X)$ and satisfies
\begin{equation}\label{Prb1-proof}
\dot{u}(t)+A(t)B(t)u(t)=f(t)\ \  \hbox{a.e. on}\  [0,T],\ \
\ \qquad u(0)=0.
\end{equation}
\noindent The uniqueness of solvability of (\ref{Prb1-proof}) follows from the one of (\ref{prb2-proof}).
\par\noindent The last assertion follows from the fact that $Bu=v\in MR(p, X)$ and the embedding $MR(p,X)\underset{d}{\hookrightarrow}C([0,T];Tr)$.
\end{proof}
\par We consider now an intermediate Banach space $Y,$ i.e., $D\underset{d}\hookrightarrow Y\underset{d}\hookrightarrow X$ such that for each $\varepsilon>0$ there exists
$\eta\geq 0$  such that
\[ \|x\|_Y\leq \varepsilon \|x\|_D+\eta\|x\|,\ \quad x\in D,\]
We then say that  $Y$ is \textit{close to $X$ compared with $D$} see \cite{ACFP}. Then we have the  following perturbation result.
\begin{proposition}\label{per-thm-Banach2}
Let $A:[0,T]\rightarrow \mathcal{L}(D,X)$  and $B:[0,T]\rightarrow \mathcal{L}(X)$ are as in Theorem \ref{per-thm-Banach}. Let $C:[0,T]\rightarrow \mathcal{L}(Y,X)$ be strongly measurable and bounded.
Then for each $(f,x_0)\in L^p(0,T;X)\times Tr$ there exists a unique $u\in MR_B(p,H)$ such that
\begin{equation}
   \begin{aligned}
  \dot{u}(t)+A(t)B(t)u(t)+C(t)u(t)&=f(t)\ \  \hbox{a.e. on}\ [0,T],\ \
\\ B(0)u(0)&=x_0.
\end{aligned}
\end{equation}
Moreover, $B(.)u(.)\in C([0,T];Tr).$
\end{proposition}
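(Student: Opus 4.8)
The plan is to reduce the perturbed equation with the extra bounded term $C(t)$ to the situation already handled in Theorem~\ref{per-thm-Banach} by absorbing $C$ into a modified right-multiplicative perturbation. Since $B(t)$ is invertible with uniformly bounded inverse, we can rewrite $C(t)u(t) = \big(C(t)B(t)^{-1}\big)B(t)u(t)$, so the operator $A(t)B(t) + C(t)$ acting on $u$ equals $\big(A(t) + C(t)B(t)^{-1}\big)B(t)$ acting on $u$, where now $C(t)B(t)^{-1} : Y \to X$ is strongly measurable and bounded (the domain $\mathcal{D}_t = \{x : B(t)x \in D\}$ is unchanged because $C(t)B(t)^{-1}$ is bounded on $X$ and only sees the $Y$-component). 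Thus it suffices to prove that $\tilde A(t) := A(t) + C(t)B(t)^{-1}$ together with $B(t)$ satisfies the hypotheses of Theorem~\ref{per-thm-Banach}.

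First I would check relative continuity and strong measurability of $\tilde A$. Strong measurability is clear. For relative continuity, note that $C(t)B(t)^{-1}$ maps $Y$ to $X$ and is bounded uniformly in $t$; since $Y$ is close to $X$ compared with $D$, for every $\varepsilon > 0$ there is $\eta \ge 0$ with $\|x\|_Y \le \varepsilon\|x\|_D + \eta\|x\|$ for $x \in D$, hence $\|C(t)B(t)^{-1}x\| \le M\|x\|_Y \le \varepsilon M\|x\|_D + \eta M\|x\|$, which shows that $x \mapsto C(t)B(t)^{-1}x$ is, uniformly in $t$, an $(\varepsilon, \eta)$-type lower-order perturbation; combined with Lemma~\ref{rel-cont-pert}(b) (applied in the sense of perturbations small relative to $D$, exactly the mechanism used for intermediate spaces in \cite{ACFP}) this gives that $\tilde A = A + C(\cdot)B(\cdot)^{-1}$ is relatively continuous whenever $A$ is. Second, I would verify the key autonomous hypothesis $B(t)\tilde A(t) \in \mathcal{MR}$ for every fixed $t$: we have $B(t)\tilde A(t) = B(t)A(t) + B(t)C(t)B(t)^{-1}$, where $B(t)A(t) \in \mathcal{MR}$ by assumption and $B(t)C(t)B(t)^{-1} : Y \to X$ is bounded, hence a perturbation that is small relative to $D$ in the above sense; by the autonomous perturbation result for $\mathcal{MR}$ under lower-order (intermediate-space) perturbations — Proposition~1.3 in \cite{ACFP}, the same tool already invoked in the proof of Theorem~\ref{per-thm-Banach} — we get $B(t)\tilde A(t) \in \mathcal{MR}$.

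With these two points established, Theorem~\ref{per-thm-Banach} applies verbatim to the pair $(\tilde A, B)$ and yields, for each $(f, x_0) \in L^p(0,T;X) \times Tr$, a unique $u \in \MR_B(p,X)$ (note $\MR_B$ is defined via the domains $\mathcal{D}_t$, which coincide for $A(t)B(t)$ and $\tilde A(t)B(t)$) solving $\dot u(t) + \tilde A(t)B(t)u(t) = f(t)$ a.e.\ with $B(0)u(0) = x_0$, together with $B(\cdot)u(\cdot) \in C([0,T];Tr)$. Unwinding the identity $\tilde A(t)B(t)u(t) = A(t)B(t)u(t) + C(t)u(t)$ gives exactly the claimed equation, and uniqueness transfers directly.

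The main obstacle is purely bookkeeping: one must be careful that the extra term $C(t)B(t)^{-1}$, which lives on the intermediate space $Y$ rather than on $D$ or $X$, is handled by the correct version of the perturbation theorem — i.e.\ perturbations that are \emph{bounded from $Y$} and hence small relative to $D$, not genuinely $X$-bounded perturbations. This is precisely why the hypothesis ``$Y$ close to $X$ compared with $D$'' was introduced, and the estimate $\|x\|_Y \le \varepsilon\|x\|_D + \eta\|x\|$ is the workhorse that makes both the relative-continuity check and the autonomous $\mathcal{MR}$-stability go through. Everything else is a direct invocation of Theorem~\ref{per-thm-Banach}.
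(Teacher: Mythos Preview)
Your strategy of absorbing $C$ into a modified $\tilde A = A + CB^{-1}$ and then invoking Theorem~\ref{per-thm-Banach} as a black box is different from the paper's route. The paper simply re-runs the substitution $v = Bu$ from the proof of Theorem~\ref{per-thm-Banach}, arriving at an equation for $v$ whose generator is $B(\cdot)A(\cdot) - \dot B(\cdot)B(\cdot)^{-1}$ plus a lower-order term involving $C$, and then appeals directly to \cite[Theorem~2.11]{ACFP} --- the non-autonomous maximal-regularity result that already allows a perturbation bounded from the intermediate space $Y$ --- rather than to \cite[Theorem~2.7]{ACFP} as in Theorem~\ref{per-thm-Banach}. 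This sidesteps having to define and analyse any $\tilde A$ at all.

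There is a genuine gap in your execution. You assert that $C(t)B(t)^{-1}:Y\to X$ is bounded and write $\|C(t)B(t)^{-1}x\|\le M\|x\|_Y$, but this is unjustified: the correct bound would be $\|C(t)B(t)^{-1}x\|\le M\|B(t)^{-1}x\|_Y$, and since $B(t)^{-1}$ is only assumed to lie in $\L(X)$, nothing in the hypotheses guarantees that $B(t)^{-1}$ maps $Y$ (or even $D$) into $Y$. Consequently $\tilde A(t)=A(t)+C(t)B(t)^{-1}$ is not obviously a well-defined element of $\L(D,X)$, and neither your relative-continuity check nor the verification that $B(t)\tilde A(t)\in\mathcal{MR}$ goes through as written. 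The paper's approach avoids this obstruction by keeping the $C$-perturbation as a separate $\L(Y,X)$-valued map acting on $v\in D\subset Y$ in the transformed equation; that is exactly why \cite[Theorem~2.11]{ACFP} (and not Theorem~2.7 there) is the tool invoked.
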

\begin{proof}
The proof is the same as the proof of  Theorem \ref{per-thm-Banach}. Replacing (\ref{prb2-proof}) in the proof of Theorem \ref{per-thm-Banach} by
\begin{equation}\label{prb2-proof2}
  \begin{aligned}
\dot{v}+B(t)A(t)v-\dot{B}(t)B(t)^{-1}v+B(t)C(t)v&=B(t)f\ \ \ \hbox{a.e. on }\ [0,T],
\ \  \\v(0)&=0,
  \end{aligned}
\end{equation}
we have only to show that $B(.)A(.)-\dot{B}(.)B(.)^{-1}+B(.)C(.)\in\mathcal{MR}(p, X),$ which is true by \cite[Theorem 2.11]{ACFP}.
\end{proof}
In the last part of this section we study the existence of the evolution family associated with the non-autonomous evolution equation (\ref{Mult-pert-Pbm}). Let $\Delta:=\{ (t,s)\in[0,T]\times [0,T]: t\geq s\}.$ Recall that a family of linear operators $(U(t,s))_{(t,s)\in\Delta}$ is a \textit{strongly continuous evolution family} on a Banach space $Y\subseteqq X$ if the following properties holds.
\begin{description}
\item[$(i)$] $U(t,s)\in\L(Y)$ for every $(t,s)\in\Delta,$
\item[$(ii)$] $U(t,t)=I$ and $U(t,s)=U(t,r)U(r,s)$ for every $0\leq s\leq r\leq t\leq T,$ and
\item [$(iii)$] for every $x\in Y$ the function $U(.,.)x$ is continuous  on $\Delta$ with value in $Y.$
\end{description}
Assume that $A$ and $B$ satisfy the hypothesis of Theorem \ref{per-thm-Banach-3}.
We have seen in the proof of Theorem \ref{per-thm-Banach-3} that $u\in MR_B(s,T,p,X)$ satisfies
 \begin{equation}\label{Mult-pert-Pbm-Initial-value2}
 \dot{u}(t)+A(t)B(t)u(t)=0\ \  t\hbox{-a.e. on}\ [s,T],\ \   B(s)u(s)=x_0.
\end{equation}
if and only if $v:=B(.)u\in MR(s,T,p,X)$ satisfies
  \begin{equation}\label{Multi-pert-Left2}\dot{v}+B(t)A(t)v-\dot{B}(t)B(t)^{-1}v=0\ \ \ \hbox{a.e. on }\ [s,T],
\ \  v(s)=x_0.\end{equation}
For every $(t,s)\in\Delta$ and every $x_0\in Tr$ we can define \[U(t,s)x_0:=v(t),\]
where $v$ is the unique solution of (\ref{Multi-pert-Left2}).
By \cite[Proposition 2.3, Propossition 2.4]{ACFP}  the family $(U(t,s))_{(s,t)\in \Delta}$
is a bounded and strongly continuous evolution family on $Tr$ and for all $f\in L^p(0,T;Tr),$
\[ v(t)=\int_0^tU(t,r)f(r)dr\]
is the unique solution of the inhomogeneous problem
\begin{equation*}\dot{v}+B(t)A(t)v-\dot{B}(t)B(t)^{-1}v=f(t)\ \ \ \hbox{a.e. on }\ [0,T],
\ \qquad  v(0)=0.\end{equation*}
Then we have the following result.
\begin{corollary}\label{evolution-family}
Let $f\in  B^{-1}L^p(0,T;Tr)$ and $u_0:=B^{-1}(0)x_0\in B^{-1}(0)Tr.$ Then the unique solution $u$ of $(\ref{per-thm-Banach-3})$
is given by \[u(t)=B^{-1}(t)U(t,0)B(0)u_0+\int_0^t B^{-1}(t)U(t,r)B(r)f(r)dr.\]
\end{corollary}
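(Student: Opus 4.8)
The plan is to transfer the problem to the left multiplicatively perturbed equation
\[
\dot v(t)+B(t)A(t)v(t)-\dot B(t)B(t)^{-1}v(t)=g(t)\ \ \text{a.e. on }[0,T],\qquad v(0)=x_0,
\]
for which the evolution family $(U(t,s))_{(t,s)\in\Delta}$ on $Tr$ and the accompanying variation-of-constants formula were already obtained in the discussion preceding the statement. I would first note that the data are admissible: writing $g:=B(\cdot)f(\cdot)$, the hypothesis $f\in B^{-1}L^p(0,T;Tr)$ gives $g\in L^p(0,T;Tr)$, and $u_0\in B^{-1}(0)Tr$ gives $x_0=B(0)u_0\in Tr$.

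First I would record the inhomogeneous, nonzero--initial--value version of the change of unknown used in the proof of Theorem \ref{per-thm-Banach}. Using Lemma \ref{lemma:lipschitz_continuous_operators}, the identity $\frac{d}{dt}B(t)^{-1}x=-B(t)^{-1}\dot B(t)B(t)^{-1}x$, and the Radon--Nikod\'ym property (Remark \ref{Radon-Nykodim}), one checks, exactly as for the equivalence $(\ref{Mult-pert-Pbm-Initial-value2})\Leftrightarrow(\ref{Multi-pert-Left2})$ but now carrying the inhomogeneity through, that $u\in\MR_B(p,X)$ solves $(\ref{per-thm-Banach-3})$ if and only if $v:=B(\cdot)u\in\MR(p,X)$ solves the left--perturbed equation displayed above with $g=B(\cdot)f(\cdot)$ and $v(0)=x_0$. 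Since $B(\cdot)A(\cdot)-\dot B(\cdot)B(\cdot)^{-1}\in\mathcal{MR}(p,X)$, as shown in the proof of Theorem \ref{per-thm-Banach}, this left--perturbed equation has a unique solution in $\MR(p,X)$.

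It then only remains to identify $v$ and to invert the change of unknown. By the two facts recalled before the statement, $U(\cdot,0)x_0$ is the solution in $\MR(p,X)$ of the left--perturbed equation with $g=0$ and initial value $x_0\in Tr$, and $t\mapsto\int_0^tU(t,r)g(r)\,dr$ is the solution with $v(0)=0$ and right--hand side $g\in L^p(0,T;Tr)$; hence, by linearity and the uniqueness just noted, $v(t)=U(t,0)x_0+\int_0^tU(t,r)B(r)f(r)\,dr$. Setting $u(t)=B(t)^{-1}v(t)$, replacing $x_0$ by $B(0)u_0$, and moving the fixed bounded operator $B(t)^{-1}$ inside the integral yields
\[
u(t)=B^{-1}(t)U(t,0)B(0)u_0+\int_0^tB^{-1}(t)U(t,r)B(r)f(r)\,dr,
\]
and uniqueness of $u$ follows from that of $v$ together with $\MR(p,X)=B(\MR_B(p,X))$ and (\ref{estimation}). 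I do not expect a genuine obstacle here: the one point to treat carefully is the bookkeeping in the first step --- that $v=B(\cdot)u$ really identifies $\MR_B(p,X)$ with $\MR(p,X)$ and that $g=Bf$ lies in $L^p(0,T;Tr)$ --- and this is immediate from Lemma \ref{lemma:lipschitz_continuous_operators} and the hypotheses on $f$ and $u_0$.
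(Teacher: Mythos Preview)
Your proposal is correct and follows exactly the approach the paper sets up: the paper provides no explicit proof of the corollary, but immediately before it establishes the equivalence $u\leftrightarrow v=B(\cdot)u$ between (\ref{Mult-pert-Pbm-Initial-value2}) and (\ref{Multi-pert-Left2}) and records the variation-of-constants formula $v(t)=U(t,0)x_0+\int_0^tU(t,r)g(r)\,dr$ for the left-perturbed problem, from which the corollary is meant to follow at once. Your only addition is the (appropriate) remark that the equivalence must be read in its inhomogeneous form with $g=Bf$, which is indeed how it was proved in Theorem \ref{per-thm-Banach}.
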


Now  assume in Theorem \ref{per-thm-Banach-3} that $A$ is norm-continuous.  Then by \cite[Theorem 3.1]{Po-St} there exists  a bounded, strongly continuous evolution family $(V(t,s))_{(t,s)\in\Delta}$ on $X$ which maps $X$ into $(X,D)_{\frac{1}{p^*},p}\cong Tr.$ Moreover,  the solution $v$  of
 \begin{equation}\label{Multi-pert-Left23}\dot{v}+B(t)A(t)v-\dot{B}(t)B(t)^{-1}v=f\ \ \ \hbox{a.e. on }\ [0,T],
\ \  v(0)=x_0.\end{equation}
for  $x_0\in Tr$ and $f\in L^p(0,T;X)$ is given by
\[v(t)= V(t,0)x_0+\int_0^t V(t,r)f(r)dr,\ \ t\in [0,T].\]
Clearly the evolution family $V$ coincides with $U$ on $Tr.$ As a consequence we obtain the following.
\begin{corollary}\label{evolution-family-X}
Assume that $A$ is norm continuous. Then the family $(\Phi(t,s))_{(t,s)\in\Delta}$ given by
 \[\Phi(t,s):=B^{-1}(t)V(t,s)B(s)\]is a bounded, strongly continuous evolution family on $X.$ Moreover, for each $f\in L^p(0,T;X)$ and $u_0:=B^{-1}(0)x_0\in B^{-1}(0)Tr.$ Moreover, the unique solution $u$ of $(\ref{per-thm-Banach-3})$
is given by \[u(t)=\Phi(t,0)u_0+\int_0^t \Phi (t,r)f(r)dr.\]

 \end{corollary}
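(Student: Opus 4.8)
The plan is to read off both assertions from the properties of the evolution family $(V(t,s))_{(t,s)\in\Delta}$ recalled just above, using two observations: conjugation by $B$ is harmless at the level of the evolution family, because the inner factors $B(r)B^{-1}(r)$ cancel; and the substitution $v=B(\cdot)u(\cdot)$ converts the right-multiplicative Cauchy problem $(\ref{per-thm-Banach-3})$ into the left-multiplicative one $(\ref{Multi-pert-Left23})$ --- exactly the device already used in the proof of Theorem \ref{per-thm-Banach}.

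First I would check the three axioms of a strongly continuous evolution family for $\Phi(t,s):=B^{-1}(t)V(t,s)B(s)$ on $X$. That $\Phi(t,s)\in\mathcal L(X)$ and $\sup_{(t,s)\in\Delta}\|\Phi(t,s)\|_{\mathcal L(X)}<\infty$ is immediate from the uniform bound on $\|B^{-1}(\cdot)\|$ (hypothesis), the uniform bound on $\|V(\cdot,\cdot)\|$ (recalled above), and the uniform bound on $\|B(\cdot)\|$ (uniform boundedness principle, since $B(\cdot)x\in C^1([0,T];X)$ for each $x$). The identity $\Phi(t,t)=B^{-1}(t)B(t)=I$ and the cocycle property $\Phi(t,r)\Phi(r,s)=B^{-1}(t)V(t,r)(B(r)B^{-1}(r))V(r,s)B(s)=\Phi(t,s)$ follow from $B(r)B^{-1}(r)=I$ and the corresponding identities for $V$. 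For strong continuity I would first record that $B:[0,T]\to\mathcal L(X)$ is strongly continuous (since $B(\cdot)x\in C^1$) and that $B^{-1}:[0,T]\to\mathcal L(X)$ is strongly continuous as well, via $B^{-1}(t)x-B^{-1}(s)x=B^{-1}(t)(B(s)-B(t))B^{-1}(s)x$ together with $\sup_r\|B^{-1}(r)\|<\infty$; then, since a uniformly bounded strongly continuous operator family composed with a norm-convergent net of vectors converges in norm, each of the three factors is jointly continuous in its arguments, and composing them gives continuity of $(t,s)\mapsto\Phi(t,s)x$ on $\Delta$ for every $x\in X$.

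Next I would establish the representation formula. Fix $(f,x_0)\in L^p(0,T;X)\times Tr$ and set $u_0:=B^{-1}(0)x_0$, so that $x_0=B(0)u_0\in Tr$. Theorem \ref{per-thm-Banach} yields the unique solution $u\in\MR_B(p,X)$ of $(\ref{per-thm-Banach-3})$. Put $v:=B(\cdot)u(\cdot)$; then $v\in\MR(p,X)$ by $(\ref{estimation})$, and by the product rule for $Bu$ --- justified as in the proof of Theorem \ref{per-thm-Banach} through $\frac{d}{dt}B^{-1}(t)x=-B^{-1}(t)\dot B(t)B^{-1}(t)x$ and the Radon--Nikod\'ym property of $X$ --- the equation $\dot u+A(t)B(t)u=f$ transforms into $\dot v+B(t)A(t)v-\dot B(t)B(t)^{-1}v=B(t)f$ a.e., with $v(0)=B(0)u(0)=x_0\in Tr$. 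Applying to this left-multiplicative problem the representation recalled before the corollary (with inhomogeneity $B(\cdot)f(\cdot)\in L^p(0,T;X)$) gives $v(t)=V(t,0)x_0+\int_0^t V(t,r)B(r)f(r)\,dr$, and multiplying by $B^{-1}(t)$ and inserting $x_0=B(0)u_0$ produces $u(t)=\Phi(t,0)u_0+\int_0^t\Phi(t,r)f(r)\,dr$; the Bochner integral makes sense because $r\mapsto V(t,r)B(r)f(r)$ is strongly measurable and dominated by $\const\|f(r)\|$.

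I do not expect a serious obstacle: the statement is essentially bookkeeping organised around the cancellation $B(r)B^{-1}(r)=I$. The two points that deserve a little care are the joint continuity of $(t,s)\mapsto\Phi(t,s)x$, which must be pieced together from strong continuity and uniform boundedness of the factors $B$, $B^{-1}$, $V$, and the validity of the product rule for $v=B(\cdot)u(\cdot)$ with $B$ merely strongly $C^1$; both are dealt with exactly as in the already established Theorem \ref{per-thm-Banach}.
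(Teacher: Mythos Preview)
Your proposal is correct and follows precisely the approach the paper intends: the paper treats the corollary as an immediate consequence (``As a consequence we obtain the following''), and the two ingredients you spell out---conjugation of the evolution family $V$ by $B$, and the substitution $v=B(\cdot)u(\cdot)$ from the proof of Theorem~\ref{per-thm-Banach}---are exactly what is implicit there. Your verification of the evolution-family axioms and of the strong continuity of $B^{-1}$ fills in details the paper leaves to the reader, but there is no difference in method.
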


\begin{remark}
The previous  results was proved in \cite{ACFP} and \cite{Po-St} in the case where $B=I.$
\end{remark}

%
\section{Evolution equations governed by  forms}\label{MR-Hilbert spaces}
Throughout this section $H,V$ are two separable  Hilbert spaces over $\mathbb K=\mathbb C$ or $\mathbb R.$ We denote by $(\cdot \mid \cdot)_V$ the scalar product and $\|\cdot\|_V$ the norm
on $V$ and by $(\cdot \mid \cdot), \|\cdot\|$ the corresponding quantities in $H.$ Moreover, we assume that
$V \underset d \hookrightarrow
H.$ 
Let $V'$ denote  the antidual of $V$ if $\mathbb K=\mathbb C$ and the dual if $\mathbb K=\mathbb R.$  The duality
between $V'$ and $V$ is denoted by $\langle \cdot, \cdot \rangle$. As usual, by identifying $H$ with  $H',$ we have   $V\hookrightarrow H\cong H'\hookrightarrow V'$
with continuous and dense embeddings (see, e.g., \cite{Bre11}).
%
\subsection{Forms and associated operators}

Consider a  \emph{continuous} and $H$-\emph{elliptic} sesquilinear form $\fra:V \times V \to \K$. This means,
respectively
\begin{equation}\label{eq:a_continuous}
    \abs{\fra(u,v)} \le M \norm u _V \norm v _V \quad \hbox{ for some }  M\geq 0 \hbox{ and all } u,v \in V,
\end{equation}
\begin{equation}\label{eq:H-elliptic}
    \Re \fra(u) + \omega \norm u^2 \ge \alpha \norm u _V^2 \quad \hbox{ for some   } \alpha>0,~\omega\in \R\hbox{ and all } u \in V.
\end{equation}
Here and in the following we shortly write $\fra(u)$ for $\fra(u,u).$ The form $\fra$ is called {\it coercive} if $\omega=0$ and {\it symmetric} if
$\fra(u,v)=\overline{\fra(v,u)}$ for all $u,v\in V.$ By the Lax-Milgram theorem, there exists an isomorphism $\A: V\longrightarrow V'$ such that
$\langle \A u,v\rangle =a(u,v)$ for all $u, v\in V.$ It is well known that $-\A$ generates a bounded holomorphic $C_0$-semigroup on $V'.$ In the case where $\K=\R$ this means that the $\C$-linear
extention of $-\A$ on the complexification of $V'$ generates a holomorphic $C_0$-semigroup. We call $\A$ {\it the operator associated with $\fra$ on $V'$}. In applications to  boundary valued problems, the operator $\A$ does not realize the boundary conditions in question. For the latter, we have to consider {\it the operator $A$ associated with $\fra$ on $H$}:
\begin{align*}
D(A):=&\{ u\in V: \exists f\in H \hbox{ such that }  \mathfrak a(u,\psi)=(f\mid \psi) \hbox{ for all  } \psi\in V\}
\\ Au:=&f.
\end{align*}
Note that $v$ is uniquely determined by $u$ since $V$ is dense in $H.$ Moreover, it is easy to see that $A$ is the part of $\A$ in $H,$ i.e.,\
\begin{align*}
    D(A) := {}& \{ u\in V : \A u \in H \}\\
    A u = {}& \A u.
\end{align*}
\begin{theorem}\label{characterization of op induced by form}
Let $A$ be an operator on $H.$ Then the following are equivalent.
\begin{itemize}
 \item[(i)] $A$ is associated with a continuous and $H$-ellipitic form $\fra:V \times V \to \K.$
\item[(ii)] There exist  $\omega\in\R$  and $\theta\in(0,\frac{\pi}{2})$ such that:

   (a) $(\omega+A)D(A)=H$,

   (b) $e^{\pm i\theta }(\omega+A)$ are  accretive.
\item[(iii)] $-A$ generates a holomorphic $C_0$-semigroup $T$ of angle $\theta \in(0,\frac{\pi}{2})$ such that  for some $\omega\in \R$
\[ \|T(z)\|_{\mathcal L(H)}\leq e^{\omega \mid z\mid},\quad z\in\Sigma_\theta:=\{re^{i\alpha} : r>0, |\alpha|<\theta\}. \]
\end{itemize}
\end{theorem}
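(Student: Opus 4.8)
The plan is to prove the cycle of implications $(i)\Rightarrow(ii)\Rightarrow(iii)\Rightarrow(i)$, which is the standard route, with the sesquilinear-form machinery and the numerical-range description of sectorial generators doing most of the work. For $(i)\Rightarrow(ii)$: suppose $A$ is associated with a continuous, $H$-elliptic form $\fra$ on $V\times V$, with constants $M$, $\alpha$, $\omega$ as in \eqref{eq:a_continuous}, \eqref{eq:H-elliptic}. Replacing $A$ by $\omega+A$ (equivalently $\fra$ by $\fra+\omega(\cdot\mid\cdot)$) we may assume the form is coercive, i.e.\ $\Re\fra(u)\ge\alpha\norm u_V^2$. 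Then for $u\in D(A)$ one has $(Au\mid u)=\fra(u)$, and the key computation is a lower bound on $\abs{\Im\fra(u)}$ relative to $\Re\fra(u)$: from $\abs{\fra(u)}\le M\norm u_V^2$ and $\Re\fra(u)\ge\alpha\norm u_V^2$ one gets $\abs{\Im\fra(u)}\le\frac M\alpha\Re\fra(u)$, so the numerical range $\{(Au\mid u):u\in D(A),\norm u=1\}$ lies in a sector $\Sigma_{\pi/2-\theta}$ with $\tan(\pi/2-\theta)=M/\alpha$. This gives (b), that $e^{\pm i\theta}(\omega+A)$ is accretive for this $\theta$; (a) is precisely surjectivity of $\omega+A$, which follows from the Lax–Milgram isomorphism $\A:V\to V'$ (the coercive form has $\A$ invertible, hence $A=\A|_H$ satisfies $(\omega+A)D(A)=H$).

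For $(ii)\Rightarrow(iii)$: this is the Lumer–Phillips / sectoriality argument. Assume (a) and (b) for some $\omega\in\R$, $\theta\in(0,\pi/2)$; again translate so $\omega=0$. Condition (b) says the numerical range of $A$ (and of $e^{\pm i\theta}A$) lies in the closed right half-plane, i.e.\ the numerical range of $A$ sits in the sector $\overline{\Sigma_{\pi/2-\theta}}$. Condition (a), surjectivity of $A$ (and, applying (a),(b) with $\theta$ replaced by any smaller angle, surjectivity of $\lambda+A$ for $\lambda$ in a suitable sector), upgrades this to: $-A$ and $\pm e^{\pm i\theta}A$ are $m$-accretive. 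A dissipative operator with $0$ in its resolvent set that is closed generates a contraction semigroup (Lumer–Phillips); applying this to $e^{\pm i\theta}A$ shows that $-A$ generates a holomorphic semigroup of angle $\theta$, and the numerical-range bound gives $\norm{T(z)}\le 1$ for $z\in\Sigma_\theta$ — undoing the translation yields $\norm{T(z)}\le e^{\omega\abs z}$. I would cite the standard reference (e.g.\ \cite{A-B 3} or \cite{Ar-El}) for the precise statement linking accretivity on two rotated rays to holomorphy of angle $\theta$; the only point needing care is that (a) together with (b) really does give $m$-accretivity and not merely accretivity, which is why surjectivity of $\omega+A$ (not just of $\lambda+A$ for large real $\lambda$) is assumed.

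For $(iii)\Rightarrow(i)$: given that $-A$ generates a holomorphic $C_0$-semigroup of angle $\theta$ with $\norm{T(z)}\le e^{\omega\abs z}$ on $\Sigma_\theta$, I would construct the form directly. The natural candidate is $V:=D((\omega+A)^{1/2})$ with graph norm, where $(\omega+A)^{1/2}$ makes sense because $\omega+A$ is sectorial of angle $<\pi/2$ and invertible (or use $\varepsilon$-shifts and pass to the limit). Set $\fra(u,v):=((\omega+A)^{1/2}u\mid(\omega+A)^{1/2}v)-\omega(u\mid v)$ on $V\times V$; then $H$-ellipticity is immediate ($\Re\fra(u)+\omega\norm u^2=\norm{(\omega+A)^{1/2}u}^2\ge c\norm u_V^2$), $V\hookrightarrow H$ densely since fractional domains are dense, and one checks $A$ is the operator associated with $\fra$ on $H$. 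Continuity of $\fra$ is where the sector angle enters: the bound $\norm{T(z)}\le e^{\omega\abs z}$ on $\Sigma_\theta$ is equivalent to $\omega+A$ being sectorial of angle $\pi/2-\theta$, and a theorem of Kato (boundedness of the imaginary powers / the "square-root" estimate for operators whose numerical range is sectorial) then yields $\abs{\fra(u,v)}\le M\norm u_V\norm v_V$. This last point — deriving continuity of the form, equivalently the two-sided estimate relating $\norm{(\omega+A)^{1/2}u}$ and the form, from the semigroup bound — is the main obstacle; I expect to invoke the characterization of form-associated operators in terms of the numerical range (as in \cite{Ar-El}) rather than re-proving Kato's square-root-type estimate from scratch, and indeed the cleanest presentation may well merge $(ii)$ and $(iii)$ as the "numerical range in a sector plus $m$-accretivity" condition and cite that equivalence wholesale.
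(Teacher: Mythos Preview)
The paper does not actually prove this theorem: immediately after the statement it writes ``For all results above we refer to, e.g.\ \cite[Chapter 2]{Tan79}, \cite[Section 5]{Are04} and \cite[Chapter 1]{Ouh05}'' and moves on. So there is no in-paper argument to compare against; your sketch goes well beyond what the authors supply.

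Your implications $(i)\Rightarrow(ii)$ and $(ii)\Rightarrow(iii)$ are the standard numerical-range/Lumer--Phillips route and are fine. The genuine gap is in $(iii)\Rightarrow(i)$: the form you write down,
\[
\fra(u,v)=((\omega+A)^{1/2}u\mid(\omega+A)^{1/2}v)-\omega(u\mid v),
\]
is Hermitian symmetric for \emph{any} operator $A$ (since $(Bu\mid Bv)=\overline{(Bv\mid Bu)}$), so its associated operator is self-adjoint. Unless $A$ happens to be self-adjoint you cannot recover it from this form; equivalently, $((\omega+A)^{*1/2}(\omega+A)^{1/2}u\mid v)\neq((\omega+A)u\mid v)$ in general. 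The correct construction (Kato's first representation theorem for $m$-sectorial operators, \cite[Chapter 1]{Ouh05} or Kato, \emph{Perturbation Theory}, VI.\S2) does use $V=D((\omega+A)^{1/2})$ as the form domain, but the form itself is obtained by extending $\fra(u,v)=((\omega+A)u\mid v)$ from $D(A)\times V$ by density and the sectoriality estimate, not via the square-root inner product. You seem to sense the difficulty in your final paragraph, but the explicit candidate you propose is simply the wrong form.
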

\noindent
 \par For all results above we refer to, e.g.\ \cite[Chapter 2]{Tan79}, \cite[Section 5]{Are04} and \cite[Chapter 1]{Ouh05}. The definition of the operator $A$ on $H$ associated with $\fra$ depends on the scalar product considered on $H,$ i.e.,  equivalent scalar products leads to different operators. \begin{proposition}\label{Mult-pert-Auto}
Let $\fra$ be a  continuous and $H$-elliptic form on $V$ and let $A$ be the associated operator on $H$. Let  $B\in \mathcal{L}(H)$ be self-adjoint such that
\begin{equation}\label{strictly positive}(Bx\mid x)\geq \beta \lVert x\lVert^2\ \ \ (x\in H)\end{equation}
for some  $\beta>0.$ Then $-AB$ generates a holomorphic $C_0$-semigroup on $H.$
\end{proposition}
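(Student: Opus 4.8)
The plan is to reduce Proposition~\ref{Mult-pert-Auto} to the autonomous form theory recalled in Theorem~\ref{characterization of op induced by form}, applied to the \emph{left} perturbation $BA$, and then to pass from $BA$ to $AB$ by a bounded similarity transform.

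First I would observe that the strict positivity $(\ref{strictly positive})$ forces $B$ to be invertible in $\L(H)$: it yields $\|Bx\|\,\|x\|\ge (Bx\mid x)\ge \beta\|x\|^2$, hence $\|Bx\|\ge\beta\|x\|$, so $B$ is injective with closed range; and since $B=B^{*}$ the range is dense, so $B$ is bijective with $\|B^{-1}\|_{\L(H)}\le\beta^{-1}$. Consequently $B^{-1}$ is self-adjoint and satisfies $(B^{-1}x\mid x)\ge\|B\|^{-1}\|x\|^2$, so that $(x\mid y)_{B}:=(B^{-1}x\mid y)$ is a scalar product on $H$ whose norm $\|\cdot\|_{B}$ is equivalent to $\|\cdot\|$. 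Writing $H_{B}:=(H,(\cdot\mid\cdot)_{B})$ we still have $V\underset{d}{\hookrightarrow}H_{B}$, and $\fra$ remains continuous and $H_{B}$-elliptic (only the constants in $(\ref{eq:a_continuous})$ and $(\ref{eq:H-elliptic})$ change, by norm equivalence).

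Next I would identify the operator associated with $\fra$ on $H_{B}$. For $u\in V$ and $g\in H$, the relation $\fra(u,\psi)=(g\mid\psi)_{B}=(B^{-1}g\mid\psi)$ for all $\psi\in V$ holds exactly when $u\in D(A)$ and $Au=B^{-1}g$, i.e. when $u\in D(A)$ and $g=BAu$. Hence the operator associated with $\fra$ on $H_{B}$ is precisely $BA$ with domain $D(A)$, and Theorem~\ref{characterization of op induced by form} (applied on $H_{B}$) gives that $-BA$ generates a holomorphic $C_{0}$-semigroup on $H_{B}$, hence — the norms being equivalent — on $H$.

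Finally I would pass to $AB$ by similarity. One checks that $AB=B^{-1}(BA)B$ with equality of domains: both operators have domain $\{x\in H:Bx\in D(A)\}$ (note $D(BA)=D(A)$ since $B\in\L(H)$), and on it $B^{-1}(BA)Bx=B^{-1}\bigl(B(A(Bx))\bigr)=ABx$. Since $B$ is bounded and boundedly invertible, conjugation by $B$ preserves generation of holomorphic $C_{0}$-semigroups: if $-BA$ generates $(S(t))_{t\ge0}$, then $-AB=-B^{-1}(BA)B$ generates $(B^{-1}S(t)B)_{t\ge0}$, which extends to a bounded analytic family on a sector. This yields the claim. The argument is essentially routine; the points deserving care are deducing invertibility of $B$ from $(\ref{strictly positive})$ — which is where the self-adjointness of $B$ is genuinely used, as a merely accretive $B$ would not produce a scalar product — and the bookkeeping of domains under the change of scalar product and under the similarity (in particular, closedness and dense definition of the a priori formal product $AB$ are inherited from $BA$ through the transform). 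An equivalent route is to conjugate by $B^{1/2}$, landing on $B^{1/2}AB^{1/2}$, but keeping the form domain $V$ fixed as above is more transparent.
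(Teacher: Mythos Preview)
Your proof is correct and follows essentially the same route as the paper: introduce the equivalent scalar product $(u\mid v)_{B}:=(B^{-1}u\mid v)$, identify $BA$ as the operator associated with $\fra$ on $H_{B}$, and then pass to $AB$ by similarity. The paper's argument is terser (it cites \cite[Section~5.3.5]{Are04} for the identification of $BA$ and only states the similarity step), while you spell out the invertibility of $B$, the domain identification, and the similarity transform explicitly.
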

\begin{proof} Let $H_B$ be the Hilbert space $H$ endowed with the
 scalar product \[(u\mid v)_B:=(B^{-1}u\mid v).\] By (\ref{strictly positive}) this scalar product induces an equivalent norm on $H.$ It is easy to see that  $BA$ is the operator associated with $\fra$ on $H_B$ \cite[Section 5.3.5]{Are04}. Then $-BA$  and, by similarity, $-AB$ generates a holomorphic $C_0$-semigroup on $H.$
\end{proof}

%
\subsection{Perturbation of non-autonomous maximal regularity in Hilbert spaces }
In this section we extend Proposition \ref{Mult-pert-Auto} to the non-autonomous setting.
Let $T>0$ and
\[
    \fra :[0,T]\times V \times V \to \mathbb K\quad\text{ and } \quad B: [0,T]\rightarrow \mathcal{L}(H).
\]
Throughout this section will make the followings assumptions on $\fra$ and $B$. As in \cite{ADLO13} we assume  that $\fra$ can be written as the sum of two non-autonomous forms
\begin{equation}\label{decomposition}\fra(t,u,v)=\fra_1(t,u,v)+\fra_2(t,u,v)\quad (t\in [0,T], u,v\in V)\end{equation}
where $\fra_1(t,u,v): [0,T]\times V \times V \to \mathbb K$ is such that

\begin{equation}\label{continuity-form1}
	|\mathfrak a_1(t,u,v) |\le M_1 \|u\|_V \|v\|_V \quad (t\in[0,T], u,v\in V)
\end{equation}
for some $M_1 \ge 0,$ and
 \begin{equation}\label{coercivite-form1}
{\rm Re}~ \mathfrak a_1 (t,u,u) +\omega\|u\|^2\ge \alpha \|u\|^2_V \quad (t\in [0,T], u\in V)
\end{equation}
for some $\alpha > 0$ and $\omega\in \mathbb R.$  We also assume that $\fra_1$ is symmetric, i.e.,
\begin{equation}\label{symmetrie}\fra_1(t,u,v)=\overline{\fra_1(t,v,u)} \qquad (t\in[0,T], u,v\in V).
\end{equation}
Further we suppose that $\fra_1$ is \emph{Lipschitz continuous} in $t\in [0,T]$, i.e., there exists $L_1>0$ such that
\begin{equation}\label{lipschitz-continuous-form}
    \abs{\fra_1(t,u,v)- \fra_1(s,u,v)} \le L_1 \abs{t-s} \norm u_V \norm v_V \quad (t,s \in [0,T],\  u,v \in V),
\end{equation}
whereas $\mathfrak a_2\colon [0,T] \times V \times H \to \mathbb K$ satisfies
\begin{equation}
|\mathfrak a_2(t,u,v)|\le M_2\|u\|_V \|v\|\quad (t\in[0,T], u\in V, v\in H)
\end{equation}
for some $M_2 > 0$ and $\mathfrak a_2(\cdot,u,v)$ is measurable for all $u\in V$, $v \in H.$ We denote by $A(t)$ the operator associated with  $a(t,u,v)$  on $H$.
\
\par\noindent Let $B: [0,T]\rightarrow \mathcal{L}(H)$ be a  Lipschitz continuous function with Lipschitz constant  $L_2>0.$
Assume that $B$ is self-adjoint and uniformly  positive, i.e., $B(t)^*=B(t)$ and \[(B(t)x\mid x)\geq \beta \lVert x\lVert_H^2\] for some  constant $\beta>0$ and for all $t\in[0,T]$    and $x\in H.$
The main result of this section reads as follows.
\begin{theorem}\label{Multiplicative-perturbation-nonautonomous-Form}  The family $\{A(t)B(t),\ t\in (0,T)\}\in \mathcal{MR}(2,H).$  Moreover, for all $x_0\in V$ there exists a unique $u\in MR_B(2,H)$ with
    \begin{align}\label{Non-aut-pbl-Form1}
         \dot u(t) + A(t)B(t)u(t)  &= f(t), \quad \text{a.e.} \ t\in[0,T]\\\label{Non-aut-pbl-Form2}
                         B(0)u(0)    &=x_0.
    \end{align}
 Moreover,  $ B(\cdot)u(\cdot) \in C([0,T];V)$ and
  \begin{equation}
   	\|u\|_{ MR_B} \leq c \Big[ \|u_0\|_V + \| f\|_{L^2(0,T;H)} \Big]
    \end{equation}
    where the constant  $c=c(L_1, c_H, T, L, M, \alpha, \beta)$ is independent of $x_0$ and $f.$
\end{theorem}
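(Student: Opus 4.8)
The plan is to reduce the right multiplicative perturbation problem \eqref{Non-aut-pbl-Form1}--\eqref{Non-aut-pbl-Form2} to a \emph{left} one, for which $L^2$-maximal regularity is available from \cite{ADLO13}, using the substitution already employed in the proof of Theorem \ref{per-thm-Banach}. First I would set $v:=B(\cdot)u(\cdot)$. Since $B$ is Lipschitz continuous and uniformly positive, $B^{-1}$ is bounded with $\|B(t)^{-1}\|_{\L(H)}\le\beta^{-1}$ and, by Lemma \ref{lemma:lipschitz_continuous_operators}, $B$ and $B^{-1}$ both map $W^{1,2}(0,T;H)$ into itself; using $\frac{d}{dt}B(t)^{-1}=-B(t)^{-1}\dot B(t)B(t)^{-1}$ exactly as in the proof of Theorem \ref{per-thm-Banach}, one checks that $u\in MR_B(2,H)$ solves \eqref{Non-aut-pbl-Form1}--\eqref{Non-aut-pbl-Form2} if and only if $v\in W^{1,2}(0,T;H)$ with $v(t)\in D(A(t))$ a.e., $A(\cdot)v(\cdot)\in L^2(0,T;H)$ and
\begin{equation}\label{reduced-left}
  \dot v(t)+B(t)A(t)v(t)-\dot B(t)B(t)^{-1}v(t)=B(t)f(t)\quad\text{a.e.\ on }[0,T],\qquad v(0)=x_0 .
\end{equation}
Here $u\mapsto B(\cdot)u(\cdot)$ is an isomorphism between $MR_B(2,H)$ and the maximal regularity space of \eqref{reduced-left} (cf.\ \eqref{estimation}; note $A(t)$ is closed for a.e.\ $t$, being associated with a form), and $B(\cdot)f(\cdot)\in L^2(0,T;H)$ with $\|B(\cdot)f(\cdot)\|_{L^2}\le(\sup_t\|B(t)\|)\|f\|_{L^2}$.

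Next I would recognize the left-hand side of \eqref{reduced-left} as a genuine left multiplicative form perturbation by factoring
\[
  B(t)A(t)-\dot B(t)B(t)^{-1}=B(t)\bigl(A(t)-B(t)^{-1}\dot B(t)B(t)^{-1}\bigr)=:B(t)\tilde A(t) .
\]
The operator $P(t):=-B(t)^{-1}\dot B(t)B(t)^{-1}$ is bounded on $H$ with $\|P(t)\|_{\L(H)}\le\beta^{-2}L_2$ for a.e.\ $t$ by Lemma \ref{lemma:lipschitz_continuous_operators}a) and $\|B(t)^{-1}\|\le\beta^{-1}$, and $t\mapsto P(t)$ is strongly measurable since $t\mapsto B(t)^{-1}$ is norm continuous and $\dot B$ is strongly measurable. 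Hence $\tilde A(t)=A(t)+P(t)$ is the operator on $H$ associated with
\[
  \tilde\fra(t,u,v):=\fra_1(t,u,v)+\tilde\fra_2(t,u,v), \qquad \tilde\fra_2(t,u,v):=\fra_2(t,u,v)+(P(t)u\mid v) ,
\]
and this decomposition fits the hypotheses of \cite{ADLO13}: the symmetric part $\fra_1$ is unchanged (continuous, $H$-elliptic, symmetric, Lipschitz in $t$), while $\tilde\fra_2\colon[0,T]\times V\times H\to\K$ is measurable in $t$ for fixed $u\in V$, $v\in H$ and satisfies $|\tilde\fra_2(t,u,v)|\le(M_2+c_H\beta^{-2}L_2)\|u\|_V\|v\|$; moreover $B$ is Lipschitz, self-adjoint and uniformly positive. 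Thus $(B,\tilde A)$ is admissible for the left perturbation theorem of \cite{ADLO13}.

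Then I would apply that theorem to the problem $\dot v+B(t)\tilde A(t)v=g$, $v(0)=x_0\in V$: for every $g\in L^2(0,T;H)$ it has a unique solution $v$ in the maximal regularity space of \eqref{reduced-left}, with $v\in C([0,T];V)$ and $\|v\|\le c\bigl(\|x_0\|_V+\|g\|_{L^2(0,T;H)}\bigr)$, the constant depending only on the structural data. Taking $g:=B(\cdot)f(\cdot)$ and $u:=B^{-1}(\cdot)v(\cdot)$, the equivalence of the first step produces the unique $u\in MR_B(2,H)$ solving \eqref{Non-aut-pbl-Form1}--\eqref{Non-aut-pbl-Form2}, with $B(\cdot)u(\cdot)=v\in C([0,T];V)$ and the stated estimate (combine the bound for $\|v\|$, the isomorphism bound from \eqref{estimation}, and $\|B(\cdot)f(\cdot)\|_{L^2}\le(\sup_t\|B(t)\|)\|f\|_{L^2}$). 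Finally, specializing to $x_0=0\in V$ gives, for each $f\in L^2(0,T;H)$, a unique $u\in W^{1,2}(0,T;H)$ with $u(t)\in\mathcal{D}_t$ a.e., $A(\cdot)B(\cdot)u(\cdot)\in L^2(0,T;H)$ and $u(0)=0$, that is $\{A(t)B(t),\ t\in(0,T)\}\in\mathcal{MR}(2,H)$.

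The step I expect to be the main obstacle is the second one: one must check that, after the substitution, the extra term $-\dot B(t)B(t)^{-1}$ is absorbed into the lower-order form $\fra_2$ while keeping the decomposition $\tilde\fra=\tilde\fra_1+\tilde\fra_2$ precisely in the shape demanded by \cite{ADLO13} — namely that $P(t)$ contributes only a $\|u\|_V\|v\|$-bounded (not a $\|u\|_V\|v\|_V$-bounded) and $t$-measurable term, and that the symmetric, $H$-elliptic, Lipschitz part $\fra_1$ is really left untouched. The remaining ingredients — the chain rule for $v=Bu$ and the norm equivalence between $MR_B(2,H)$ and the maximal regularity space of \eqref{reduced-left} — are routine and essentially done in Section \ref{MR-Banach spaces}.
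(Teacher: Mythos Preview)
Your proposal is correct and follows essentially the same route as the paper: reduce to the left multiplicative problem via $v=B(\cdot)u(\cdot)$, absorb the extra term $-\dot B B^{-1}$ by passing to the form $\tilde\fra=\fra-(B^{-1}\dot B B^{-1}\,\cdot\mid\cdot)$ (which, as you note, only modifies the lower-order part $\fra_2$), apply \cite[Theorem 4.2]{ADLO13}, and transfer back using Lemma~\ref{lemma:lipschitz_continuous_operators} and \eqref{estimation}. Your write-up is in fact more explicit than the paper's about where the perturbation $P(t)$ lands in the decomposition and why the hypotheses of \cite{ADLO13} are preserved.
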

\begin{proof} Let $x_0 \in V$, $f \in L^2(0,T; H)$.
By assumption on $B$ and Lemma \ref{lemma:lipschitz_continuous_operators}, $B^{-1}\dot BB^{-1}:[0,T]\rightarrow \mathcal{L}(H)$ is  bounded and for each $u\in H$ we have that
$t\mapsto  B(t)^{-1}\dot {B}(t)B(t)^{-1}u$ is weakly  measurable. Then applying \cite[Theorem 4.2]{ADLO13} to
 \[\tilde\fra:=\fra-(B^{-1}\dot BB^{-1}.\mid.)\]
we deduce that $\{B(t)A(t)-\dot {B}(t)B(t)^{-1},\ t\in (0,T)\}\in \mathcal{MR}(2,H)$ and  the non-autonomous Cauchy problem
\begin{eqnarray}\label{Left-Mult-pert-Pbm}
 \dot{v}(t)+(B(t)A(t)-\dot{ B}(t)B^{-1}(t))v(t)&=&B(t)f(t)\ \ \
 \\\label{Left-Mult-pert-Pbm}  v(0)&=&x_0
\end{eqnarray}
has a unique solution $v\in H^1(0,T;H)\cap L^2(0,T;V)$ such that $B(\cdot)u(\cdot)$ belongs to $C([0,T];V).$ Using Lemma \ref{lemma:lipschitz_continuous_operators}, the second part of the proof is the same as in the proof of the Theorem \ref{per-thm-Banach}.
\par\noindent The last assertion follows from  (\ref{estimation})  and estimate (4.1) in \cite[Theorem 4.2]{ADLO13}.
\end{proof}
We say that  $B:[0,T] \to \L(H)$ is \emph{piecewise Lipschitz-continuous} if
there exist $0= t_0 < t_1 < \dots < t_n =T$ such that on each interval $(t_{i-1}, t_i)$  the restriction of $B$ is Lipschitz-continuous on $(t_{i-1}, t_i ).$ Then the following corollary follows easily from  Theorem \ref{Multiplicative-perturbation-nonautonomous-Form}.

\begin{corollary}\label{Cor-Multiplicative-perturbation-nonautonomous-Form}
    Assume instead of the Lipschitz continuity  that $B:[0,T] \to \L(H)$ is merely piecewise Lipschitz-continuous. Then the family $\{A(t)B(t),\ t\in (0,T)\}\in \mathcal{MR}(2,H).$  Moreover, for all $x_0\in V$ the exists a unique $u\in MR_B(2,H)$ satisfies
    \begin{align*}
         \dot u(t) + A(t)B(t)u(t)  &= f(t) \quad \text{a.e.}\ t\in [0,T]\\
                         B(0)u(0)    &=x_0.
    \end{align*}
    Moreover,  $B(\cdot)u(\cdot) \in C([0,T];V).$
\end{corollary}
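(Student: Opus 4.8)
The plan is to reduce the piecewise-Lipschitz case to the Lipschitz case handled in Theorem \ref{Multiplicative-perturbation-nonautonomous-Form} by splitting the interval $[0,T]$ at the finitely many points $0 = t_0 < t_1 < \dots < t_n = T$ where $B$ may fail to be Lipschitz, and then gluing the solutions obtained on each subinterval. On each $(t_{i-1},t_i)$ the restriction of $B$ is Lipschitz-continuous, and the restriction of $\fra$ still satisfies all the hypotheses \eqref{continuity-form1}--\eqref{lipschitz-continuous-form} (these are stable under restriction), so Theorem \ref{Multiplicative-perturbation-nonautonomous-Form}, applied on the interval $[t_{i-1},t_i]$ in place of $[0,T]$, gives $\{A(t)B(t),\ t\in(t_{i-1},t_i)\} \in \mathcal{MR}(2,H)$, together with solvability of the Cauchy problem on that subinterval for any initial datum $x_0 \in V$ and right-hand side in $L^2(t_{i-1},t_i;H)$, with $B(\cdot)u(\cdot) \in C([t_{i-1},t_i];V)$.

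First I would set up the induction on the subintervals. Given $x_0 \in V$ and $f \in L^2(0,T;H)$, solve on $[t_0,t_1]$ with $B(t_0)u(t_0) = x_0$; by the theorem the solution $u_1$ satisfies $B(\cdot)u_1(\cdot) \in C([t_0,t_1];V)$, so the value $w_1 := B(t_1)u_1(t_1) \in V$ is well defined. Then solve on $[t_1,t_2]$ with initial condition $B(t_1)u_2(t_1) = w_1$, and continue. The concatenation $u$ of the pieces $u_i$ lies in $L^2(0,T;V)$ and, restricted to each open subinterval, solves the equation; one then checks that $u \in H^1(0,T;H)$, i.e.\ that no jump in $u$ or spurious distributional derivative is introduced at the interior points $t_i$. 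Continuity of $u$ at $t_i$ in $H$ follows because $B(\cdot)u_i(\cdot)$ and $B(\cdot)u_{i+1}(\cdot)$ are both continuous into $V \hookrightarrow H$ and agree at $t_i$ by construction (here one uses that $B(t_i)$ is invertible with bounded inverse, so matching $Bu$ at $t_i$ is equivalent to matching $u$); and a function that is piecewise $H^1$ in time and globally continuous is globally $W^{1,1}$, hence in $W^{1,p}$ once its a.e.\ derivative is in $L^p$. This shows $u \in MR_B(2,H)$ and that it solves \eqref{Non-aut-pbl-Form1}--\eqref{Non-aut-pbl-Form2}; moreover $B(\cdot)u(\cdot) \in C([0,T];V)$ since it is continuous on each closed piece with matching endpoint values.

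Uniqueness is immediate: if $u,\tilde u$ are two solutions in $MR_B(2,H)$, their restrictions to $[t_0,t_1]$ both solve the Cauchy problem there with the same data, hence coincide by the uniqueness part of Theorem \ref{Multiplicative-perturbation-nonautonomous-Form}; then $B(t_1)u(t_1) = B(t_1)\tilde u(t_1)$, so the restrictions to $[t_1,t_2]$ coincide, and so on up to $t_n = T$. Finally, $\{A(t)B(t),\ t\in(0,T)\} \in \mathcal{MR}(2,H)$ follows from the same gluing argument applied with $x_0 = 0$ and arbitrary $f \in L^2(0,T;H)$, or alternatively by noting that maximal regularity on $(0,T)$ follows from maximal regularity on each subinterval plus the solvability-with-initial-value statement, exactly the mechanism of Proposition \ref{Proposition-sub-interval-MR}.

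The only genuinely delicate point is the gluing regularity at the interior nodes: one must be sure that concatenating $H^1$-in-time pieces that match continuously (in $V$, hence in $H$) really yields a global $H^1(0,T;H)$ function with no Dirac mass in the weak derivative. This is a standard fact — a continuous, piecewise-$W^{1,1}$ function is $W^{1,1}$ with derivative equal to the piecewise derivative — but it is the step that must be stated carefully, using the Radon-Nikod\'ym-type characterisation recalled in Remark \ref{Radon-Nykodim}; the rest is bookkeeping over finitely many intervals.
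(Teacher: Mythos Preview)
Your proposal is correct and is precisely the standard gluing argument the paper has in mind; the paper gives no details beyond stating that the corollary ``follows easily from Theorem \ref{Multiplicative-perturbation-nonautonomous-Form}'', and your inductive solve-and-match procedure across the subintervals $[t_{i-1},t_i]$ is the natural way to make this precise. The only point worth adding explicitly is that on each piece one works with the Lipschitz extension of $B|_{(t_{i-1},t_i)}$ to the closed interval, so that Theorem \ref{Multiplicative-perturbation-nonautonomous-Form} applies verbatim.
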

Theorem \ref{Multiplicative-perturbation-nonautonomous-Form} and Corollary \ref{Cor-Multiplicative-perturbation-nonautonomous-Form} are restricted to the case $p=2.$ For the general case $(p\in(1,\infty))$ we give a result under the additional assumption that the domain $D(A(t))=D$ of the operators induced by the forms $\fra(t,.,.)$ are $t$-independent. However, the  domains of the perturbed operator $A(t)B(t)$ \[D(A(t)B(t)):=\{ x\in H: B(t)x\in D\}\] may depend on the  time variable $t.$  For this we use the results of Section \ref{MR-Banach spaces}. In fact, the following results is a consequence of Theorem \ref{per-thm-Banach} and Proposition \ref{Mult-pert-Auto}.

\begin{theorem}\label{Lp-max-reg-Hilbert} Assume that  $A:[0,T]\rightarrow \mathcal{L}(D,H)$ is relatively continuous and $B:[0,T]\longrightarrow\L(H)$ is piecewise Lipschitz-continuous. Then for every $(f,x_0)\in L^p(0,T;H)\times Tr$ there exists a unique $u\in MR_B(p,H)$ such that

\begin{equation}
   \begin{aligned}
  \dot{u}(t)+A(t)B(t)u(t)&=f(t)\ \  t\hbox{-a.e. on}\ [0,T],\ \
\\ B(0)u(0)&=x_0.
\end{aligned}
\end{equation}
Moreover, $B(.)u(.)\in C([0,T];Tr).$
\end{theorem}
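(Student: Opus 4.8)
The plan is to deduce the statement from the autonomous Proposition~\ref{Mult-pert-Auto} together with Theorem~\ref{per-thm-Banach}, and then to bridge the gap between ``Lipschitz'' and ``piecewise Lipschitz'' by a concatenation argument. Throughout, $X=H$ plays the role of the Banach space of Section~\ref{Section-Perturbation I}: Hilbert spaces are reflexive, hence have the Radon-Nikod\'ym property, and $D\underset{d}{\hookrightarrow}H$ by the standing hypotheses, so that framework applies. First I would establish the autonomous input, namely $B(t)A(t)\in\mathcal{MR}$ for every $t\in[0,T]$. Recall that $A(t)$ is the operator on $H$ associated with $\fra(t,\cdot,\cdot)=\fra_1(t,\cdot,\cdot)+\fra_2(t,\cdot,\cdot)$, and this form is continuous and $H$-elliptic on $V$: continuity follows from (\ref{continuity-form1}), the bound on $\fra_2$ and $V\hookrightarrow H$, and $H$-ellipticity from (\ref{coercivite-form1}) after absorbing the $\fra_2$-term by Young's inequality. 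Since moreover $B(t)$ is self-adjoint with $(B(t)x\mid x)\ge\beta\|x\|^2$, Proposition~\ref{Mult-pert-Auto} (and its proof) shows that both $-A(t)B(t)$ and $-B(t)A(t)$ generate holomorphic $C_0$-semigroups on $H$; in a Hilbert space this is equivalent to $L^p$-maximal regularity \cite{DSi}, so $B(t)A(t)\in\mathcal{MR}$ for all $t$. Note also $\sup_{t\in[0,T]}\|B(t)^{-1}\|_{\L(H)}\le\beta^{-1}$, as required in Section~\ref{Section-Perturbation I}.

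Next, assume first that $B$ is Lipschitz continuous on all of $[0,T]$. Using Lemma~\ref{lemma:lipschitz_continuous_operators} to supply a bounded, strongly measurable derivative $\dot B$ and the associated product and quotient rules, the proof of Theorem~\ref{per-thm-Banach} goes through verbatim with $X=H$: the family $B(\cdot)A(\cdot)-\dot B(\cdot)B(\cdot)^{-1}$ is strongly measurable and relatively continuous by Lemma~\ref{rel-cont-pert}, and \cite[Proposition~1.3 and Theorem~2.7]{ACFP} place it in $\mathcal{MR}(p,H)$, from which the solution $u=B^{-1}v$ is produced exactly as there. Hence, for every $(f,x_0)\in L^p(0,T;H)\times Tr$ there is a unique $u\in MR_B(p,H)$ with $\dot u+A(\cdot)B(\cdot)u=f$ a.e.\ and $B(0)u(0)=x_0$, and $B(\cdot)u(\cdot)\in C([0,T];Tr)$; the same holds on every subinterval, so the hypothesis of Proposition~\ref{Proposition-sub-interval-MR} is met.

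For $B$ merely piecewise Lipschitz, fix $0=t_0<\dots<t_n=T$ such that $B$ is Lipschitz on each $[t_{i-1},t_i]$ and argue by concatenation. On $[t_0,t_1]$ solve with datum $x_0$ by the previous step; if a solution has been built on $[0,t_i]$ with $B(\cdot)u(\cdot)\in C([0,t_i];Tr)$, then $B(t_i)u(t_i)\in Tr$, so Proposition~\ref{Proposition-sub-interval-MR} --- whose hypothesis holds on $[t_i,T]$ by the Lipschitz case applied there --- extends the solution to $[t_i,t_{i+1}]$ with initial value $B(t_i)u(t_i)$. Gluing the pieces yields $u\in MR_B(p,H)$: the $W^{1,p}$-pieces match because $u$ is continuous, and $A(\cdot)B(\cdot)u\in L^p(0,T;H)$ since it lies in $L^p$ on each piece; then $u$ solves the equation a.e.\ with $B(0)u(0)=x_0$, and $B(\cdot)u(\cdot)\in C([0,T];Tr)$ because at each $t_i$ the one-sided limits both equal $B(t_i)u(t_i)$. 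Uniqueness is obtained inductively from the uniqueness on subintervals in Proposition~\ref{Proposition-sub-interval-MR}: the difference of two solutions solves the homogeneous problem with zero datum on $[0,t_1]$, hence vanishes there, then on $[t_1,t_2]$, and so on. The step demanding the most care is this last concatenation --- matching traces at the partition points and assembling a genuine $MR_B$-solution on $[0,T]$ from the local ones --- together with the (minor) observation that the $C^1$-assumption on $B$ in Theorem~\ref{per-thm-Banach} may be weakened to Lipschitz regularity via Lemma~\ref{lemma:lipschitz_continuous_operators}; the reduction to Proposition~\ref{Mult-pert-Auto} is otherwise routine.
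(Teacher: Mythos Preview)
Your proposal is correct and follows precisely the route the paper indicates: it states only that the theorem ``is a consequence of Theorem~\ref{per-thm-Banach} and Proposition~\ref{Mult-pert-Auto}'', and you have filled in exactly those details --- Proposition~\ref{Mult-pert-Auto} together with de~Simon's result \cite{DSi} to obtain $B(t)A(t)\in\mathcal{MR}$, Theorem~\ref{per-thm-Banach} (with the $C^1$-hypothesis relaxed to Lipschitz via Lemma~\ref{lemma:lipschitz_continuous_operators}) for the Lipschitz case, and a concatenation for the piecewise case.
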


\begin{remark} Theorem \ref{Lp-max-reg-Hilbert} its corollary and Theorem \ref{Multiplicative-perturbation-nonautonomous-Form} remain true if we assume piecewise Lipschitz continuity  of $\fra_1$.
\end{remark}
%
\section{A general class of parabolic equations}\label{Application}
This section is devoted to an application of our results on $L^p$-maximal regularity to the non-autonomous partial differential equation (\ref{dissipative system}) with time dependent coefficients.
\par\noindent
\subsection{Description and assumptions}\label{Example: general class}
Let $1\leq k\leq n$ and $0\leq r\leq 2k.$ For $r=0$ we will use the  notations $\K^0:=\{0\}, \K^{0\times 2k}:=\L(\K^{2k},\{0\})$ and $\K^{2k\times 0}:=\L(\{0\},\K^{2k}).$ Let $T>0.$ As example we consider   the linear parabolic system
\begin{align}\label{dissipative system2}
  \partial_t u(t,\zeta)+\A(t,\zeta,\partial)\H(t,\zeta)u(t,\zeta)&=f(t,\zeta),\qquad \qquad \zeta\in[0,1],\  0\leq t\leq T,
\\ \H(0,\zeta)u(0,\zeta)&=x_0(\zeta),\qquad \qquad \ \zeta\in [0,1],
\\ \label{dissipative system2-BC1}F^*\B_{\partial}(t)(\H(t) u)&=-W_R(t)F^*\mathcal C_{\partial}(\H(t) u),\ \qquad  0\leq t\leq T,
\\ \label{dissipative system2-BC2}(I-FF^*)\mathcal C_{\partial}(\H(t) u)&=0, \ \ \qquad\qquad\qquad\qquad\qquad  \  0\leq t\leq T
  \end{align}
in $H:=L^2(0, 1; \mathbb K^n),$ where
\[\A(t,\zeta,\partial):=-\frac{\partial}{\partial \zeta}\big(GS(t)\frac{\partial}{\partial \zeta}G^*+P_1\big)-P_0,\]

\begin{equation*}\label{Boundary-condition-no1}
\B_{\partial}(t)(\H(t) u):=\left[
           \begin{array}{c}
            G^*\big(GS(t)\frac{\partial }{\partial\zeta}G^*\H(t,1)u(t,1)+P_1\H(t,1)u(t,1)\big) \\
             -G^*\big(GS(t)\frac{\partial }{\partial\zeta}G^*\H(t,0)u(t,0)+P_1\H(t,0)u(t,0)\big)\\
           \end{array}\right]
\end{equation*}
and
\begin{equation*}\mathcal C_{\partial}(t)(\H(t) u):=\left[
                                        \begin{array}{c}
                                       G^*\H(t,1)u(t,1) \\
                                       G^*\H(t,0)u(t,0) \\
                                       \end{array}
                                            \right]
\end{equation*}
We always assume the following.
 \begin{assumption}\label{Assumption}\end{assumption}

\begin{enumerate}
\item $G\in \K^{n\times k}$  has full rank and $GG^*\in \K^{n\times n}$ is a projection.
\item $P_0\in L^{\infty}(0,1; \mathbb K^{n\times n}).$
\item $P_1\in W^{1,\infty}(0,1;\K^{n\times n})$ and for some $\kappa>0$
 \begin{equation}\label{couplung-cond}
 |(I-GG^*)P_1(\zeta)u|\leq \kappa_1 |GG^*u|  \ \text{ for all }u\in \K^n, \ a.e. \text{ on } (0,1)
 \end{equation}
\item $\H:[0,T]\times [0,1]\longrightarrow \K^{n\times n}$ is self-adjoint, uniformly positive, i.e., $\H(t,\zeta)^*=\H(t,\zeta)$ and
    $0<m_1I\leq \mathcal H(t,\zeta)\leq M_1I\  (t\in [0,T], \zeta\  a.e.\in [0,1])$ for some constants $m_1,M_1>0$  and Lipschitz continuous w.r.t. the first variable such that
\[\vert \mathcal H(t,\zeta)-\mathcal H(s,\zeta)\vert\leq L_1\vert t-s\vert \ \quad (t\in [0,T], a.e. \zeta\in [0,1])\]
 for some constant $L_1>0.$
\item $S:[0,T]\times [0,1]\longrightarrow \K^{k\times k}$  satisfies properties analogous to those of $\H$ with corresponding constants $m_2,M_2$ and $L_2.$
\item $F\in \K^{2k\times r}$ has full rank and $FF^*\in\K^{2k\times 2k}$ is a projection.
\item $W_R(t):[0,T]\longrightarrow \K^{r\times r}$ is   Lipschitz continuous with $W_R(t)=W_R^*(t)\geq 0$ for all $t\in [0,T].$
\end{enumerate}

\noindent  Note that $G^*G=I_{\K^k}$ and $F^*F=I_{\K^r}.$
The Hilbert space  $H:=L^2(0, 1; \mathbb K^n)$ is endowed with the standard $L_2-$norm $\|.\|_{L^2}.$ We define the realization $A(t)$ of $\A(t,\zeta,\partial)$ on $H$ by
\begin{equation}\label{pH-Operator-diss}
A(t)=-\frac{\partial}{\partial \zeta}\big(GS(t)\frac{\partial}{\partial \zeta}G^*+P_1\big)
  -P_0
\end{equation}
with domain
\begin{align}\nonumber D(A(t)):=&\left\{ u\in H: G^*u\in H^1(0,1;\K^k), \  GS(t)\frac{\partial }{\partial\zeta}G^*u+P_1u\in H^{1}(0,1; \mathbb K^n),\right.
   \\\nonumber&\qquad\qquad F^*\B_{\partial}(t)( u)=-W_R(t)F^*\mathcal C_{\partial}( u)\left. \hbox{ and }(I-FF^*)\mathcal C_{\partial}( u)=0
                                        \displaystyle  \right\}.
\end{align}
Thus the parabolic system  (\ref{dissipative system2})-(\ref{dissipative system2-BC2}) correspond to the non-autonomous abstract Cauchy problem
\begin{equation}\label{Non-aut-pbl-abstract example}
         \dot u(t) + A(t)\H(t)u(t) = f(t), \quad \text{a.e. \ on } \ [0,T],\ \ \qquad
                         \H(0)u(0)  =x_0.
    \end{equation}
We aim to investigate the well-posedness of $(\ref{Non-aut-pbl-abstract example})$
 with $L^p$-maximal regularity.
%

\subsection{Autonomous case}\label{Autonomous-general equation}
We consider in this subsection the autonomous case, i.e., that is  the parameters $\H(t)=\H,S(t)=S$ and $W_R(t)=W_R$ are independent of the time variable $t\in[0,T].$ Define the sesquilinear form $\fra:V\times V\to \K$  by
\begin{align}\label{form-example}
\fra(u,v):=(S(G^*u)'\mid (G^*v)')_{L_2}&+(P_1u\mid G(G^*v)')_{L_2}-([(I-GG^*)P_1u]'\mid v)_{L_2}
\\\nonumber &-(P_0u\mid v)_{L_2}+
     \mathcal C_{\partial}(v)^* FW_RF^*\mathcal C_{\partial}(u)
   \end{align}
with domain
\begin{equation}\label{form-example-domain}
 V:=\left\{v\in H: G^*v\in H^1(0,1;\K^k) \text{ such that }  (I-FF^*)\mathcal C_{\partial}(v)=0\right\}\end{equation}
where $V$ is equipped with the norm
\[\|v\|_{V}^2:= \|v\|_{L^2}^2+\|(G^*v)'\|_{L^2}^2.\]
The Hilbert space $V$ is continuously and  densely embedded into $H.$
\begin{lemma}\label{form-well define} The Hilbert space $V$ satisfies
\[V\subset \left\{v\in L^2(0,1;\K^n): (I-GG^*)P_1v\in H^1(0,1;\K^n)\right\}\]
and there exists $\kappa_2>0$ such that
\[\|(I-GG^*)P_1v\|_{H^1}\leq \kappa_2\|v\|_V.\]
In particular $\fra$ defined in (\ref{form-example})-(\ref{form-example-domain}) is well-defined.
\end{lemma}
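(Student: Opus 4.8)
The plan is to show that membership in $V$ forces extra regularity on $(I-GG^*)P_1 v$, and then to read off the norm estimate and conclude that every term in the definition \eqref{form-example} makes sense. First I would decompose an arbitrary $u\in\K^n$ along the orthogonal splitting induced by the projection $GG^*$, writing $u = GG^*u + (I-GG^*)u$; note that $G^*u\in H^1(0,1;\K^k)$ controls the $GG^*u = G(G^*u)$ part in $H^1(0,1;\K^n)$ (since $G$ is a constant matrix), whereas the complementary part $(I-GG^*)u$ is a priori only in $L^2$. The key point is the coupling condition \eqref{couplung-cond}: applying $(I-GG^*)$ to $P_1 v$ and using that $P_1\in W^{1,\infty}(0,1;\K^{n\times n})$ together with $|(I-GG^*)P_1(\zeta)w|\le \kappa_1|GG^*w|$ pointwise, one sees that $(I-GG^*)P_1 v$ is, up to the Lipschitz factor $P_1$, controlled by $GG^*v = G(G^*v)$, which lies in $H^1$. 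Concretely, $(I-GG^*)P_1 v = (I-GG^*)P_1 (GG^*v) + (I-GG^*)P_1((I-GG^*)v)$, and \eqref{couplung-cond} kills the dependence on $(I-GG^*)v$ in a suitable sense; differentiating the product $(I-GG^*)P_1 v$ one obtains a term with $P_1'\in L^\infty$ times $v\in L^2$ and a term with $P_1\in L^\infty$ times $(G^*v)'\in L^2$, hence $(I-GG^*)P_1 v\in H^1(0,1;\K^n)$.

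Next I would extract the quantitative bound. Collecting the estimates from the previous step, $\|(I-GG^*)P_1 v\|_{L^2}\le \|P_1\|_\infty\|v\|_{L^2}$ and $\|\big((I-GG^*)P_1 v\big)'\|_{L^2}\lesssim \|P_1'\|_\infty\|v\|_{L^2} + \|P_1\|_\infty\|(G^*v)'\|_{L^2}$, so that
\[
\|(I-GG^*)P_1 v\|_{H^1}\le \kappa_2\big(\|v\|_{L^2}+\|(G^*v)'\|_{L^2}\big) = \kappa_2\|v\|_V
\]
for $\kappa_2 = \kappa_2(\|P_1\|_{W^{1,\infty}},\kappa_1,|G|)$. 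This gives both displayed assertions of the lemma.

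Finally, for the ``in particular'' part, I would check term by term that $\fra(u,v)$ in \eqref{form-example} is finite for $u,v\in V$: the first term pairs $S(G^*u)'$ with $(G^*v)'$, both in $L^2$, and $S\in L^\infty$; the second pairs $P_1 u\in L^2$ with $G(G^*v)'\in L^2$; the third is $-([(I-GG^*)P_1 u]'\mid v)_{L^2}$, which is now legitimate precisely because the first part of the lemma guarantees $[(I-GG^*)P_1 u]'\in L^2$; the fourth pairs $P_0 u\in L^2$ with $v\in L^2$; and the boundary term $\mathcal C_\partial(v)^* F W_R F^*\mathcal C_\partial(u)$ is a finite-dimensional bilinear expression in the boundary traces $G^*u(1),G^*u(0)$, which are well-defined and bounded by $\|G^*u\|_{H^1}\le\|u\|_V$ via the Sobolev embedding $H^1(0,1)\hookrightarrow C[0,1]$. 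Hence $\fra$ is well-defined (and, implicitly, continuous) on $V\times V$.

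The main obstacle is the third step's regularity claim: showing $(I-GG^*)P_1 v\in H^1$ genuinely requires the coupling condition \eqref{couplung-cond}, without which $(I-GG^*)P_1 v$ would only be in $L^2$ and the term $([(I-GG^*)P_1 u]'\mid v)_{L^2}$ in $\fra$ would be meaningless; getting the distributional derivative under control — in particular justifying the product rule for $(I-GG^*)P_1 v$ with $P_1\in W^{1,\infty}$ and $v$ only in $V$ — is where the real work lies.
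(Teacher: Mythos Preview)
Your proposal is correct and follows essentially the same route as the paper: both arguments use the coupling condition \eqref{couplung-cond} to deduce that $(I-GG^*)P_1(\zeta)$ annihilates $\ker GG^*$, hence $(I-GG^*)P_1 v = (I-GG^*)P_1\,G\,(G^*v)$ is a product of a $W^{1,\infty}$ matrix with the $H^1$ function $G^*v$. The paper packages this by introducing the matrix $R(\zeta):=(I-GG^*)P_1(\zeta)$ on $\operatorname{ran} GG^*$ and $0$ on $\ker GG^*$, observes $R\in W^{1,\infty}$, and declares the rest immediate; your decomposition $v=GG^*v+(I-GG^*)v$ together with the explicit product-rule computation is the same argument written out in more detail.
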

\begin{proof}
By Assumption \ref{Assumption}.3 we may define $\zeta \mapsto R(\zeta)\in\K^{n\times n}$ by
\[ R(\zeta):=\left\{%
\begin{array}{ll}
    (I-GG^*)P_1(\zeta) &\hbox{ on }   {\rm ran }\ GG^*
    ,\\
    0 & \hbox{ on  } ({\rm ran }\ GG^*)^\bot={\rm ker }\ GG^*, \\
\end{array}%
\right. \]
 which implies  $R\in W^{1,\infty}(0,1;\K^{n\times n}).$ From here the assertion is immediate.
\end{proof}
\begin{lemma}\label{associate form example} The sesquilinear form $\fra:[0,T]\times V\times V\rightarrow \K$ defined by (\ref{form-example})-(\ref{form-example-domain}) is continuous and $H$-elliptic.
\end{lemma}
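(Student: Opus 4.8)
The plan is to verify directly the two estimates \eqref{eq:a_continuous} and \eqref{eq:H-elliptic} for the form $\fra$ defined by \eqref{form-example}--\eqref{form-example-domain}, treating each of the five terms separately. For continuity, the first term $(S(G^*u)'\mid(G^*v)')_{L_2}$ is controlled by $\|S\|_{L^\infty}\|(G^*u)'\|_{L^2}\|(G^*v)'\|_{L^2}\le \|S\|_{L^\infty}\|u\|_V\|v\|_V$; the terms involving $P_1$ and $P_0$ are handled using $P_1\in W^{1,\infty}$, $P_0\in L^\infty$ together with Lemma \ref{form-well define} to bound $\|(I-GG^*)P_1u\|_{H^1}$ by $\kappa_2\|u\|_V$, and $|G(G^*v)'|\le|(G^*v)'|$ since $G^*G=I$. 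For the boundary term, I would note that $\mathcal C_\partial(u)$ depends only on the boundary values of $G^*u$, which are controlled by $\|G^*u\|_{H^1}\le\|u\|_V$ via the trace inequality (continuity of $H^1(0,1)\hookrightarrow C[0,1]$); combined with the boundedness of $FW_RF^*$ this gives the required bound. Summing yields \eqref{eq:a_continuous} with $M$ depending on the data constants.

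For $H$-ellipticity, the leading term is coercive: using Assumption \ref{Assumption}.5, $\Re(S(G^*u)'\mid(G^*u)')_{L_2}\ge m_2\|(G^*u)'\|_{L^2}^2$. The boundary term is nonnegative since $W_R(t)=W_R^*(t)\ge 0$, so $\mathcal C_\partial(u)^*FW_RF^*\mathcal C_\partial(u)\ge 0$ (and in any case has nonnegative real part). The remaining three lower-order terms are estimated in absolute value by $C\|u\|_V\|u\|$ (again using Lemma \ref{form-well define} for the term with $(I-GG^*)P_1u$), and then Young's inequality $C\|u\|_V\|u\|\le\frac{m_2}{2}\|(G^*u)'\|_{L^2}^2 + C'\|u\|^2$ absorbs the $V$-part into the coercive term at the cost of a large multiple of $\|u\|^2$. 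Recalling $\|u\|_V^2=\|u\|_{L^2}^2+\|(G^*u)'\|_{L^2}^2$, this produces $\Re\fra(u)+\omega\|u\|^2\ge\alpha\|u\|_V^2$ for suitable $\alpha>0$ and large $\omega$.

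The one genuine subtlety — and the place I would be most careful — is the lower-order term $-([(I-GG^*)P_1u]'\mid v)_{L_2}$: naively the derivative hitting $P_1u$ would need $u'\in L^2$, which is false for $u\in V$ (only $(G^*u)'\in L^2$). This is exactly what Lemma \ref{form-well define} is for: the function $R=(I-GG^*)P_1$ introduced there annihilates $\ker GG^*$, so $(Ru)' = R'u + R(GG^*u)'$ and the second factor reduces to a multiple of $(G^*u)'$, keeping everything in $L^2$ with norm $\le\kappa_2\|u\|_V$. With that observation the term fits the pattern $\le C\|u\|_V\|v\|$ for continuity and $\le C\|u\|_V\|u\|$ for ellipticity, and the argument closes. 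All constants can be tracked to depend only on $m_2$, $M_2$, $\|P_0\|_{L^\infty}$, $\|P_1\|_{W^{1,\infty}}$, $\kappa_2$, $\|W_R\|$ and the embedding constant of $V\hookrightarrow C[0,1]$, as asserted implicitly in the surrounding text.
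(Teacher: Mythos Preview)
Your proposal is correct and follows essentially the same route as the paper: coercivity of $S$ for the leading term, nonnegativity of the $W_R$-boundary term, Lemma~\ref{form-well define} to control $[(I-GG^*)P_1u]'$, and Young's inequality to absorb the cross terms; for continuity the paper likewise appeals to Lemma~\ref{form-well define}, Cauchy--Schwarz and the Sobolev embedding. Your explicit identification of the subtlety with $[(I-GG^*)P_1u]'$ and the decomposition $R'u+R(GG^*u)'$ is exactly the content behind Lemma~\ref{form-well define}, so nothing is missing.
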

\begin{proof} We may and will assume that $P_0=0.$
Let $v\in  V.$ Let $\varepsilon>0$ such that $m_2-1/{2\varepsilon}>0.$
It follows from Assumption \ref{Assumption}.5, Assumption \ref{Assumption}.3 and Young's inequality
\begin{align*}\Re\fra(v,v)
&=\Re (S(G^*v)'\mid (G^*v)')_{L_2}
-\Re(P_1v\mid G(G^*v)')_{L_2}
\\&\hskip5cm -\Re([(I-GG^*)P_1v]'\mid v)_{L^2}
\\&\geq  m_2\|(G^*v)'\|_{L^2}^2-\|P_1\|_\infty\|G\|\|v\|_{L^2}\|(G^*v)'\|_{L^2}
\\&\hskip5cm -\|[(I-GG^*)P_1v]'\|_{L^2}\|v\|_{L^2}
\\&\geq  m_2\|(G^*v)'\|_{L^2}^2-\|P_1\|_\infty\|G\|\|v\|_{L^2}\|(G^*v)'\|_{L^2}
\\&\qquad-\kappa_2(\|(G^*v)'\|_{L^2}+\|v\|_{L^2})\|v\|_{L^2}
\\&= m_2\|(G^*v)'\|_{L^2}^2 -(\|P_1\|_\infty\|G\|+\kappa_2)\|(G^*v)'\|_{L^2}
\|v\|_{L^2}-\kappa_2\|v\|_{L^2}^2
\\&\geq (m_2-1/{\varepsilon})\|(G^*v)'\|_{L^2}^2-(\frac{\varepsilon}{2}\tilde\kappa^2_2+\kappa_2)\|v\|_{L^2}^2
\end{align*}
where $\tilde\kappa_2:=\|P_1\|_\infty\|G\|+\kappa_2.$ Thus  \[\Re\fra(v,v)+\omega\|v\|_{L^2}^2\geq \alpha\| v\|_{V}^2,\]
where $\omega:=1+\frac{\varepsilon}{2}\tilde\kappa^2+\kappa_2$ and $\alpha:=\min\{1, (m_2-1/{\varepsilon})\}.$
The continuity follows easily from Lemma \ref{form-well define}, the Cauchy-Schwartz inequality and the Sobolev embedding Theorem.
\end{proof}
\noindent We define on $H$ the operator
\begin{equation}\label{pH-Operator-diss}
Au=-\frac{\partial}{\partial \zeta}\big(GS\frac{\partial}{\partial \zeta}G^*u+P_1u\big)
  -P_0u
\end{equation}
with domain
\begin{align}\label{pH-Operator-diss-Dom} D(A):=&\left\{ u\in H: G^*u\in H^1(0,1;\K^k), \  GS\frac{\partial }{\partial\zeta}G^*u+P_1u\in H^{1}(0,1; \mathbb K^n),\right.
   \\\nonumber&\qquad\qquad F^*\B_{\partial}( u)=-W_RF^*\mathcal C_{\partial}( u)\left. \hbox{ and }(I-FF^*)\mathcal C_{\partial}( u)=0
                                        \displaystyle  \right\}
\end{align}
\begin{proposition}\label{Thm-gene-operateor-asso}
The operator associated with $\fra$ on $H$ is the operator $(A,D(A))$ defined by (\ref{pH-Operator-diss})-(\ref{pH-Operator-diss-Dom}), and thus $-A$ generates a holomorphic $C_0-$semigroup.
\end{proposition}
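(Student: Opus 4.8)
The plan is to verify the two defining requirements for the operator associated with $\fra$ on $H$: namely, that for $u \in D(A)$ with $Au = f$ one has $\fra(u,\psi) = (f \mid \psi)$ for all $\psi \in V$, and conversely that any $u \in V$ admitting such an $f \in H$ lies in $D(A)$ as described in (\ref{pH-Operator-diss-Dom}). Once this identification is established, the final claim that $-A$ generates a holomorphic $C_0$-semigroup is immediate from Lemma \ref{associate form example} (which gives continuity and $H$-ellipticity of $\fra$) together with Theorem \ref{characterization of op induced by form}, (i)$\Rightarrow$(iii).

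First I would take $u \in V$ and $f \in H$ with $\fra(u,\psi) = (f \mid \psi)_{L^2}$ for all $\psi \in V$, and unfold the form using (\ref{form-example}). Testing against $\psi \in C_c^\infty(0,1;\K^n)$ (for which the boundary term $\mathcal C_\partial(\psi)^* F W_R F^* \mathcal C_\partial(u)$ vanishes) and integrating by parts distributionally shows that $GS(G^*u)' + (I-GG^*)P_1 u$ has a weak derivative in $L^2$; combined with Lemma \ref{form-well define} (which handles the $(I-GG^*)P_1 u \in H^1$ part) and the coupling condition Assumption \ref{Assumption}.3, one deduces $G S (G^*u)' G^* \ldots$ — more precisely that $GS\frac{\partial}{\partial\zeta}G^*u + P_1 u \in H^1(0,1;\K^n)$ and that $-\frac{\partial}{\partial\zeta}(GS\frac{\partial}{\partial\zeta}G^*u + P_1 u) - P_0 u = f$ in $L^2$, so the differential expression (\ref{pH-Operator-diss}) is satisfied and $Au = f$. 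Then I would relax $\psi$ to range over all of $V$ and perform the integration by parts keeping the boundary contributions: the boundary terms that appear are exactly $\B_\partial(u)$ paired against $\mathcal C_\partial(\psi)$ (this is where the structure of $GS(G^*u)' + P_1 u$ at $\zeta = 0,1$ enters), and matching them against the form's boundary term $\mathcal C_\partial(\psi)^* F W_R F^* \mathcal C_\partial(u)$ for all admissible $\psi$ (those with $(I-FF^*)\mathcal C_\partial(\psi) = 0$) forces $F^* \B_\partial(u) = -W_R F^* \mathcal C_\partial(u)$. Together with the constraint $(I-FF^*)\mathcal C_\partial(u) = 0$ already built into $V$, this places $u \in D(A)$. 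The converse inclusion is the same computation read backwards: for $u \in D(A)$, integrate by parts in $\fra(u,\psi)$, use the two boundary conditions in $D(A)$ to cancel the boundary contributions against the $FW_RF^*$ term, and read off $\fra(u,\psi) = (Au \mid \psi)_{L^2}$.

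The main obstacle I expect is the careful bookkeeping of the boundary terms in the integration by parts, since the form (\ref{form-example}) contains the asymmetrically written pieces $(P_1 u \mid G(G^*v)')_{L^2}$ and $-([(I-GG^*)P_1 u]' \mid v)_{L^2}$ rather than a single symmetric $(P_1 u \mid v')$-type term; one must check that these reassemble correctly into $\B_\partial(u)$ evaluated at the endpoints and that the algebraic identities $G^*G = I_{\K^k}$, $GG^*$ a projection, $F^*F = I_{\K^r}$ are used consistently so that only the components $G^* \H(t,\cdot) u$ and $G^*(GS(G^*u)' + P_1 u)$ at the endpoints survive, matching the definitions of $\B_\partial$ and $\mathcal C_\partial$. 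The regularity needed to make these traces meaningful comes from Lemma \ref{form-well define} and the membership $GS\frac{\partial}{\partial\zeta}G^*u + P_1 u \in H^1$, so there is no difficulty with well-definedness of the pointwise values. Apart from this, the argument is routine, and the holomorphic semigroup generation conclusion follows with no further work from Lemma \ref{associate form example} and Theorem \ref{characterization of op induced by form}.
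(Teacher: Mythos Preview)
Your proposal is correct and follows essentially the same approach as the paper's proof: both directions are established by integration by parts, first testing against $C_c^\infty(0,1;\K^n)$ to identify the differential expression and the $H^1$-regularity of $GS(G^*u)'+P_1u$, then against general $v\in V$ to extract the natural boundary condition $F^*\B_\partial(u)=-W_RF^*\mathcal C_\partial(u)$ (using that $F^*\mathcal C_\partial$ maps $V$ onto $\K^r$). The only cosmetic difference is that the paper treats the inclusion $A\subset B$ first and you treat the converse first; the substance is identical, and your invocation of Lemma~\ref{associate form example} and Theorem~\ref{characterization of op induced by form} for the semigroup conclusion matches the paper's reasoning.
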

 \begin{proof}Without loss of generality, we may and will assume $P_0=0.$
Denote by $(B,D(B))$ the operator associated with $\fra$ on $H,$ i.e.,
\begin{align*}
D(B):=&\{ u\in  V: \exists f\in H \hbox{ such that }  \mathfrak a(u,\psi)=(f\mid \psi) \hbox{ for all  } \psi\in V\}
\\ Bu:=&f.
\end{align*}
Let $u\in D(A)$. Then for all $v\in V$ we have
\begin{align}
(Au\mid v)_{L^2}\nonumber&=-\big(\frac{\partial}{\partial \zeta}\big(GS\frac{\partial}{\partial \zeta}G^*u+P_1u\big)\mid v\big)_{L^2}
\\\nonumber&=-\big((I-GG^*)\frac{\partial}{\partial \zeta}\big(GS\frac{\partial}{\partial \zeta}G^*u+P_1u\big)\mid v\big)_{L^2}
\\\nonumber&\hskip2.5cm-\big(\frac{\partial}{\partial \zeta}\big(GS\frac{\partial}{\partial \zeta}G^*u+P_1u\big)\mid GG^* v\big)_{L^2}
\\\nonumber&=-\big(\frac{\partial}{\partial \zeta}(I-GG^*)P_1u\mid v\big)_{L^2}+\big(GS\frac{\partial}{\partial \zeta}G^*u+P_1u\mid G\frac{\partial}{\partial \zeta}G^* v\big)_{L^2}
\\\label{Boundary-Con}&\hskip2cm+\Big[(GG^* v)(\zeta)^*(-GS\frac{\partial}{\partial \zeta}G^*u-P_1u\big)(\zeta)\Big]_0^1
\end{align}
Here we have used the fact that \[(I-GG^*)\big(GS\frac{\partial}{\partial \zeta}G^*u+P_1u\big)=
(I-GG^*)P_1u.\]
The condition  \[(I-FF^*)\mathcal C_\partial(v)=0\]
in the definition of $V$ and  the fact that $u\in D(A)$ imply that the boundary term in (\ref{Boundary-Con}) is equal to
\begin{align*}
\mathcal C_\partial(v)^*\mathcal B_\partial(u)&=\mathcal C_\partial(v)^*(I-FF^*+FF^*)\mathcal B_\partial(u)
\\&=\mathcal C_\partial(v)^*FW_RF^*\mathcal C_\partial(u).
\end{align*}
Thus $\fra(u,v)=(Au\mid v)_{L^2}.$ This proves $A\subset B.$
For the converse inclusion, let $u\in D(B).$ Then \begin{align}\label{App-eq0}
(Bu\mid v)_{L^2}&=\fra(u,v)
\\\nonumber&=(S(G^*u)'\mid (G^*v)')_{L_2}+(P_1u\mid G(G^*v)')_{L_2}-([(I-GG^*)P_1u]'\mid v)_{L_2}
\\\nonumber&=(GS(G^*u)'\mid v')_{L_2}+(P_1u\mid GG^*v')_{L_2}+((I-GG^*)P_1u\mid v')_{L_2}
\\\nonumber&=(GS(G^*u)'+P_1u\mid v')_{L_2}
\end{align}
for all $v\in C_c^\infty(0,1;\K^n)\subset V.$ This means, by the definition of the weak derivative, that $GS(G^*u)'+P_1u\in H^1(0,1,\K^n)$  and
\begin{equation}\label{weak-der}
Bu=-\frac{\partial }{\partial\zeta}(GS\frac{\partial }{\partial\zeta}(G^*u)+P_1u)
\end{equation}
Let $v\in  V$. Inserting (\ref{weak-der}) in (\ref{App-eq0}) and integrating by part we obtain
\begin{align*} &\mathcal C_\partial(v)^*FW_RF^*\mathcal C_\partial(u)=-\mathcal C_\partial(v)^*FF^*\mathcal B_\partial(u).
\end{align*}
On the other hand, for each $z\in\K^r$ there exists $v\in  V$ such that
\[z=F^*\mathcal C_\partial(v).\]
In fact, remark that $\{\mathcal C_\partial(v), v\in V \}=\ker(I-FF^*)$
and $\K^r={\rm ran}\ F^*F=F^*{\rm ran}\ F=F^*\ker(I-FF^*).$ We conclude that
\[F^*\mathcal B_\partial(u)=-W_RF^*\mathcal C_\partial(u).\]
Therefore, $u\in D(A)$ and $Bu=Au.$ This completes the proof.
\end{proof}

\par\noindent Now Proposition \ref{generation thm example} below follows from Lemma \ref{associate form example}, Proposition \ref{Thm-gene-operateor-asso}  and Proposition \ref{Mult-pert-Auto}.
\begin{proposition}\label{generation thm example}
 The operator  $-A\mathcal H$ given by \begin{equation*}\label{Pert-General-port-Hamiltonian-system}
 A\mathcal H:=-\frac{\partial}{\partial \zeta}\big(GS\frac{\partial}{\partial \zeta}G^*\mathcal H+P_1\mathcal H\big)
  -P_0\mathcal H
 \end{equation*}
with domain
\[D(A\mathcal H)=\{u\in H \ : \mathcal H u\in D(A)\}\]
generates  a holomorphic $C_0$-semigroup on $H.$
 \end{proposition}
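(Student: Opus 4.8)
The plan is to obtain this directly from the autonomous form-perturbation result Proposition \ref{Mult-pert-Auto}, feeding into it the two structural facts already established for the present example. First I would note that, in the autonomous case, the form $\fra$ defined by (\ref{form-example})--(\ref{form-example-domain}) is continuous and $H$-elliptic by Lemma \ref{associate form example}, and that by Proposition \ref{Thm-gene-operateor-asso} the operator on $H=L^2(0,1;\K^n)$ associated with $\fra$ (for the standard $L^2$-inner product) is precisely $(A,D(A))$ as in (\ref{pH-Operator-diss})--(\ref{pH-Operator-diss-Dom}).

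Next I would check that $\H$, regarded as the multiplication operator $(\H u)(\zeta)=\H(\zeta)u(\zeta)$ on $H$, meets the hypotheses of Proposition \ref{Mult-pert-Auto}. This is a short pointwise computation using Assumption \ref{Assumption}.4: from $\H(\zeta)^*=\H(\zeta)$ and $0<m_1 I\le\H(\zeta)\le M_1 I$ a.e.\ on $(0,1)$ one gets $\H\in\mathcal L(H)$ with $\|\H\|_{\mathcal L(H)}\le M_1$, $\H^*=\H$, and $(\H u\mid u)=\int_0^1(\H(\zeta)u(\zeta),u(\zeta))\,\d\zeta\ge m_1\|u\|^2$ for all $u\in H$, i.e.\ (\ref{strictly positive}) holds with $\beta=m_1>0$.

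Applying Proposition \ref{Mult-pert-Auto} with $B=\H$ then gives at once that $-A\H$ generates a holomorphic $C_0$-semigroup on $H$, where $A\H$ carries its natural domain $D(A\H)=\{u\in H:\H u\in D(A)\}$; it remains only to observe that spelling out $A(\H u)$ reproduces the differential expression displayed in the statement, which is immediate from (\ref{pH-Operator-diss}). I do not expect a genuine obstacle here --- the substance lies entirely in Lemma \ref{associate form example}, Proposition \ref{Thm-gene-operateor-asso} and Proposition \ref{Mult-pert-Auto}; the only point requiring a little care is that ``the operator associated with $\fra$ on $H$'' in Proposition \ref{Thm-gene-operateor-asso} refers to the standard scalar product, so that the change-of-inner-product (similarity) argument internal to the proof of Proposition \ref{Mult-pert-Auto} applies without modification.
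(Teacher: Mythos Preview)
Your proposal is correct and follows exactly the paper's own route: the paper simply states that the result follows from Lemma \ref{associate form example}, Proposition \ref{Thm-gene-operateor-asso} and Proposition \ref{Mult-pert-Auto}, and you have spelled out precisely this deduction, including the easy verification that the multiplication operator $\H$ satisfies the hypotheses of Proposition \ref{Mult-pert-Auto}.
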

Next, Proposition \ref{contraction semigroup example} below gives additional conditions under which the $-A\H$ generates a contraction semigroup. 
 \begin{proposition}\label{contraction semigroup example} Assume that the following assumptions holds.
 \begin{description}
 \item[$(i)$] $W_R+F^*\left(
                       \begin{array}{cc}
                         G^*P_1(1)G & 0 \\
                         0 & -G^*P_1(0)G \\
                       \end{array}
                     \right)F^*\geq 0$
  \item[$(ii)$] $\Re(P_0(.)+GG^*P_1'(.)+\frac{1}{2}GG^*P'(.)GG^*)\leq 0 $
      \item[$(iii)$] $P_1(.)=P_1(.)^*$
  \end{description}
  Then $-A\H$ generates a contractive semigroup on $H$  with respect to the scalar product
  \[(u\mid v)_\H:=(\H u\mid v)_{L^2}.\]
 \end{proposition}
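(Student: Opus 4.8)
The plan is to show that the sesquilinear form $\fra$ defined in $(\ref{form-example})$--$(\ref{form-example-domain})$ is \emph{quasi-contractive} with respect to the scalar product $(u\mid v)_\H:=(\H u\mid v)_{L^2}$, i.e.\ that $\Re\fra(u,u)\geq 0$ for all $u\in V$ under assumptions $(i)$--$(iii)$, and then invoke the abstract theory. Indeed, by Proposition \ref{Thm-gene-operateor-asso}, $A$ is the operator associated with $\fra$ on $H$ endowed with the standard scalar product, and by a computation identical to the one in Proposition \ref{Mult-pert-Auto} (see \cite[Section 5.3.5]{Are04}) the operator $A\H$ is the operator associated with $\fra$ on the Hilbert space $H_\H$ carrying $(\cdot\mid\cdot)_\H$; here one uses that $\H$ is self-adjoint and uniformly positive (Assumption \ref{Assumption}.4), so $(\cdot\mid\cdot)_\H$ is an equivalent scalar product on $H$. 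By Theorem \ref{characterization of op induced by form}, $-A\H$ then generates a holomorphic $C_0$-semigroup which is contractive on $H_\H$ precisely when $\fra$ is accretive on $H_\H$; but since $\fra$ itself does not depend on the scalar product (only the associated operator does), this is exactly the condition $\Re\fra(u,u)\geq 0$ for all $u\in V$ — equivalently, $e^{\pm i\cdot 0}A\H$ accretive, which by part (ii)(b)/(iii) of Theorem \ref{characterization of op induced by form} with $\omega=0$ yields contractivity.

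The heart of the proof is therefore the inequality $\Re\fra(u,u)\geq 0$. I would start from the expression for $\fra(u,u)$, using $(\ref{form-example})$ and, as in the proof of Proposition \ref{Thm-gene-operateor-asso}, reintegrating the term $-([(I-GG^*)P_1u]'\mid u)_{L^2}$ by parts to expose a boundary contribution and a bulk contribution. The term $(S(G^*u)'\mid (G^*u)')_{L_2}$ is nonnegative since $S$ is uniformly positive (Assumption \ref{Assumption}.5). Assumption $(iii)$, $P_1=P_1^*$, is what makes the cross terms $(P_1u\mid G(G^*u)')_{L_2}$ combine with their conjugates into a total derivative $\frac12\frac{d}{d\zeta}\big((G^*u)^*G^*P_1 G (G^*u)\big)$ up to a correction involving $P_1'$; integrating this total derivative produces precisely the boundary block $F^*\mathrm{diag}(G^*P_1(1)G,-G^*P_1(0)G)F^*$ appearing in $(i)$, while the $P_1'$ correction together with the $-(P_0 u\mid u)_{L_2}$ term and the $GG^*P'GG^*$ piece assembles into the quantity whose real part is controlled by $(ii)$. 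The remaining genuinely boundary term $\mathcal C_\partial(u)^*FW_RF^*\mathcal C_\partial(u)$ is nonnegative by Assumption \ref{Assumption}.7 ($W_R\geq 0$); adding it to the $P_1$-boundary block gives the full matrix in $(i)$ applied to $F^*\mathcal C_\partial(u)$, hence $\geq 0$. Summing up, all bulk terms are $\geq 0$ by $(ii)$ and positivity of $S$, and all boundary terms are $\geq 0$ by $(i)$, so $\Re\fra(u,u)\geq 0$.

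The main obstacle I anticipate is bookkeeping the boundary terms carefully: one must express the boundary values $(G^*u)(0),(G^*u)(1)$ through $\mathcal C_\partial(u)$ (recall $\mathcal C_\partial(u)=[\,G^*\H\cdots\,]$, but here we are in the autonomous picture and $\mathcal C_\partial$ is evaluated on $u$ directly, consistently with $(\ref{form-example})$), respect the sign conventions in the definition of $\B_\partial$ and $\mathcal C_\partial$ (the minus sign in the second block), and verify that the integration by parts of the $(I-GG^*)P_1u$ term contributes \emph{no} boundary term because $(I-GG^*)P_1u$ pairs against $u$ and $(I-GG^*)$ annihilates the range of $GG^*$ — so the boundary data only see the $GG^*$-component, which is exactly what $\mathcal C_\partial$ records. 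A secondary subtlety is that $(ii)$ is an inequality between matrix-valued $L^\infty$ functions, so it must be applied pointwise a.e.\ under the integral; this is routine given that $u(\zeta)\in\K^n$ for a.e.\ $\zeta$. Once $\Re\fra(u,u)\geq 0$ is established, the generation-of-contraction-semigroup conclusion is immediate from Theorem \ref{characterization of op induced by form}(ii)--(iii) applied with $\omega=0$ on the space $H_\H$, exactly as in Proposition \ref{Mult-pert-Auto}.
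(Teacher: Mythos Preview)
Your strategy is exactly the paper's: show that $\fra$ is accretive, i.e.\ $\Re\fra(u,u)\ge 0$ for all $u\in V$, and then conclude via the form--operator correspondence on $(H,(\cdot\mid\cdot)_\H)$ as in Proposition~\ref{Mult-pert-Auto}. The ingredients you list (positivity of $S$, use of $P_1=P_1^*$ to produce a boundary block matching $(i)$, and assembling the $P_0$ and $P_1'$ contributions into $(ii)$) are also the ones the paper uses.

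There is, however, one concrete gap in your plan. You propose to ``reintegrate the term $-([(I-GG^*)P_1u]'\mid u)_{L^2}$ by parts'' against $u$, but this step is not available: for $u\in V$ one only has $G^*u\in H^1(0,1;\K^k)$, not $u\in H^1(0,1;\K^n)$, so $u'$ need not exist and an integration by parts pairing $(I-GG^*)P_1u$ with $u'$ is illegitimate. Your heuristic that ``the boundary data only see the $GG^*$-component'' points in the right direction but does not resolve this. The missing step, which is the core of the paper's computation, is to invoke the coupling condition in Assumption~\ref{Assumption}.3 (equation~\eqref{couplung-cond}), which yields the algebraic identity
\[
(I-GG^*)P_1=(I-GG^*)P_1\,GG^*.
\]
With this identity one combines the two $P_1$-terms in $\fra(u,u)$ and rewrites them entirely through $GG^*u\in H^1(0,1;\K^n)$, obtaining $\Re(GG^*u\mid (P_1GG^*u)')_{L^2}$; now the integration by parts is legitimate and, using $P_1=P_1^*$, produces precisely the boundary block $F^*\mathrm{diag}(G^*P_1(1)G,-G^*P_1(0)G)F$ appearing in $(i)$ together with the $P_1'$ bulk contribution entering $(ii)$. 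Once this is in place, the rest of your outline goes through verbatim.
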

 \begin{proof} It suffices  to prove that the sesquilinear $(\fra,V)$ given by (\ref{form-example})-(\ref{form-example-domain})
 is accretive, i.e., $\Re\fra(u,u)\geq 0$ for all $u\in V.$ In fact, $-A$ generates a contractive semigroup on $H$ if and only if $-A\H$ generates a contractive semigroup on $(H, (.\mid .)_\H).$
 \par\noindent From Assumption \ref{Assumption} we deduce that $(I-GG^*)P_1=(I-GG^*)P_1GG^*.$ Thus for each $u\in V$
 \begin{align*}
 \Re(u\mid &[(P_1GG^*-(I-GG^*)P_1)u]')_{L^2}=\Re(u\mid [(P_1-(I-GG^*)P_1)GG^*u]')_{L^2}
 \\&=\Re(u\mid GG^*(P_1GG^*u)')_{L^2}=\Re(GG^*u\mid (P_1GG^*u)')_{L^2}
 \\&=-\frac{1}{2}(GG^*u\mid P'_1(GG^*u))_{L^2}+\frac{1}{2}[(GG^*u)^*(\zeta)P_1(\zeta)(GG^*u)(\zeta)]_0^1
 \\&=-\frac{1}{2}(GG^*u\mid P'_1(GG^*u))_{L^2}+
 \\&\frac{1}{2}\left(
     \begin{array}{cc}
       (G^*u)(1)   \\
       (G^*u)(0)   \\
     \end{array} \right)^*FF^*\left(
                       \begin{array}{cc}
                         G^*P_1(1)G & 0 \\
                         0 & -G^*P_1(0)G^* \\
                       \end{array}
                     \right)FF^*\left(
     \begin{array}{cc}
       (G^*u)(1)   \\
       (G^*u)(0)   \\
     \end{array} \right).
\end{align*}
It follows from $(i)-(iii)$ and (\ref{form-example}) that $\fra$ is accretive. This is equivalent to the fact that $-A$ generates a contraction semigroup.
 \end{proof}
%
\subsection{Non-autonomous case}\label{example-nonautonomous}
Let us come back to the non-autonomous situation and recall Assumption \ref{Assumption}. We observe $L^p$-maximal regularity in the following two cases.
\par \textbf{\textit{$1^{st}$ case}}: Let $p\in(1,\infty)$ be arbitrary. We then assume that $S$ and $W_R$ do not depend on the time variable $t\in [0,T]$ and obtain the   following well-posedness result.
\begin{theorem}\label{Lp-MR-port-H} Let $p\in(1,\infty)$ and assume that $S(t,.)=S(.), W_R(t)=W_R$ do not depend on $t\in[0,T].$ Then for any given $x_0\in (H,D(A))_{1-\frac{1}{p},p}$ and $f\in L^p(0,T;H)$ there exists a unique $u\in MR_\H(p,H)$ satisfying
the non-autonomous  system
\begin{align}\nonumber
  \partial_t u(t,\zeta)+\A(\zeta,\partial)\H(t,\zeta)u(t,\zeta)&=f(t,\zeta)
\\ \nonumber \H(0,\zeta)u(0,\zeta)&=x_0(\zeta)
\\ \nonumber F^*\B_{\partial}(\H(t) u)&=-W_RF^*\mathcal C_{\partial}(\H(t) u)
\\ \nonumber (I-FF^*)\mathcal C_{\partial}(\H(t) u)&=0.
  \end{align}

\end{theorem}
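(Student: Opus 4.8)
\textbf{Proof plan for Theorem \ref{Lp-MR-port-H}.}
The plan is to cast the system as the abstract Cauchy problem
\begin{equation*}
  \dot u(t) + A(t)\H(t)u(t) = f(t), \qquad \H(0)u(0)=x_0,
\end{equation*}
with $A(t)$ the realization \eqref{pH-Operator-diss} on $H=L^2(0,1;\K^n)$, and then to verify the hypotheses of Theorem \ref{per-thm-Banach} (equivalently Theorem \ref{Lp-max-reg-Hilbert}), taking $B(t):=\H(t)$. First I would observe that, since $S$ and $W_R$ are now independent of $t$, the operator $A(t)=A$ is \emph{autonomous}; its domain $D:=D(A)$ given by \eqref{pH-Operator-diss-Dom} is therefore trivially $t$-independent, and the constant map $t\mapsto A$ is strongly measurable and relatively continuous. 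By Lemma \ref{associate form example} and Proposition \ref{Thm-gene-operateor-asso}, $A$ is the operator associated on $H$ with the continuous $H$-elliptic form $\fra$ of \eqref{form-example}, so by Theorem \ref{characterization of op induced by form} the operator $-A$ generates a holomorphic $C_0$-semigroup on $H$; since $H$ is a Hilbert space, $A\in\mathcal{MR}$.

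Next I would check the hypotheses on $B(t)=\H(t)$. By Assumption \ref{Assumption}.4, $\H(t)$ is self-adjoint (as a multiplication operator on $H$), uniformly positive with $\H(t,\zeta)\ge m_1 I$, hence $(\H(t)x\mid x)_{L^2}\ge m_1\|x\|_{L^2}^2$, so the uniform positivity \eqref{strictly positive} holds with $\beta=m_1$; moreover $\H(t)$ is boundedly invertible with $\sup_t\|\H(t)^{-1}\|\le m_1^{-1}$, and $t\mapsto \H(t)$ is Lipschitz continuous into $\L(H)$ with constant $L_1$ (pointwise-in-$\zeta$ Lipschitz bound passes to the operator norm of multiplication operators). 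In particular $\H$ is piecewise Lipschitz continuous, so Theorem \ref{Lp-max-reg-Hilbert} applies directly. The remaining algebraic input for Theorem \ref{per-thm-Banach} is that $\H(t)A\in\mathcal{MR}$ for every $t$: this is exactly Proposition \ref{Mult-pert-Auto} applied with the form $\fra$ and the self-adjoint uniformly positive operator $B=\H(t)$, which gives that $-A\H(t)$ (and $-\H(t)A$) generates a holomorphic $C_0$-semigroup on $H$, hence lies in $\mathcal{MR}$.

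With these verifications in place, Theorem \ref{per-thm-Banach} (or Theorem \ref{Lp-max-reg-Hilbert}) yields, for every $(f,x_0)\in L^p(0,T;H)\times Tr$, a unique $u\in MR_\H(p,H)$ solving the abstract problem with $\H(0)u(0)=x_0$, together with $\H(\cdot)u(\cdot)\in C([0,T];Tr)$. It remains only to identify the trace space $Tr$ with $(H,D(A))_{1-\frac1p,p}$: this is the standard isomorphism $Tr\cong (X,D)_{1/p^*,p}$ recalled in Section \ref{MR-Banach spaces} (with $\frac1{p^*}+\frac1p=1$, so $\frac1{p^*}=1-\frac1p$), applied to $X=H$, $D=D(A)$. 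I would also note that $\H(0)$ being an isomorphism of $H$ mapping $D(A)$ onto $D(A\H(0))$ means the condition $x_0\in (H,D(A))_{1-1/p,p}$ is the natural one on the data. Finally I would translate the abstract statement back into the PDE formulation \eqref{dissipative system2}–\eqref{dissipative system2-BC2} (with $S,W_R$ frozen), the boundary conditions being encoded in $D(A)$. The only mildly delicate point — and the one I would write out carefully — is the passage from the pointwise-in-$\zeta$ Lipschitz and positivity bounds on the matrix function $\H$ to the corresponding $\L(H)$-statements needed to invoke Proposition \ref{Mult-pert-Auto} and Theorem \ref{Lp-max-reg-Hilbert}; everything else is a direct citation of the results already established.
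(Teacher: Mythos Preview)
Your proposal is correct and follows essentially the same approach as the paper: the paper's proof simply notes that Assumption \ref{Assumption}.4 makes $t\mapsto\H(t)$ Lipschitz continuous and uniformly positive in $\L(H)$, that $A(t)=A$ is constant since $S$ and $W_R$ are time-independent, and then invokes Theorem \ref{per-thm-Banach}. You have merely written out in more detail the verifications (that $A$ comes from the form $\fra$ via Proposition \ref{Thm-gene-operateor-asso}, that $\H(t)A\in\mathcal{MR}$ via Proposition \ref{Mult-pert-Auto}, and the identification of $Tr$) which the paper leaves implicit.
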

\begin{proof} By Assumption \ref{Assumption}.4 $t\mapsto\H(t,.)$ is Lipschitz continuous and uniformly positive as a function $[0,T]\longrightarrow \L(H).$ Moreover, $A(t)=A$ is constant, so the result follows from Theorem \ref{per-thm-Banach}.
\end{proof}
\par \textbf{\textit{$2^{nd}$ case}}: $p=2.$ In this case we do not impose additional assumption on $S, W_R,$ besides Assumption A.
\begin{theorem}\label{L2-Mr-port-H} Given $x_0\in V$ and $f\in L^2(0,T;H)$
the non-autonomous  system
 \begin{align}\nonumber
  \partial_t u(t,\zeta)+\A(t,\zeta,\partial)\H(t,\zeta)u(t,\zeta)&=f(t,\zeta)
\\ \nonumber \H(0,\zeta)u(0,\zeta)&=x_0(\zeta)
\\ \nonumber F^*\B_{\partial}(t)(\H(t) u)&=-W_R(t)F^*\mathcal C_{\partial}(\H(t) u)
\\ \nonumber (I-FF^*)\mathcal C_{\partial}(\H(t) u)&=0
  \end{align}
has a unique solution $u\in H^1(0,T;H)\cap L^2(0,T;V).$
\end{theorem}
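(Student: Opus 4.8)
The plan is to verify that Theorem~\ref{L2-Mr-port-H} is exactly an instance of Theorem~\ref{Multiplicative-perturbation-nonautonomous-Form} (together with its corollary for the piecewise Lipschitz case) applied to the form $\fra(t,\cdot,\cdot)$ and multiplication operator $B(t)=\H(t)$ constructed in Section~\ref{Example: general class}. First I would record the time-dependent analogue of the form: for $t\in[0,T]$ define $\fra(t,u,v)$ by the same expression as in \eqref{form-example} but with $S$ and $W_R$ replaced by $S(t)$ and $W_R(t)$, on the $t$-independent domain $V$ from \eqref{form-example-domain}. By Assumption~\ref{Assumption}.1--3,6,7 and Lemma~\ref{form-well define}, this form is well-defined for each $t$, and by (the $t$-uniform version of) Lemma~\ref{associate form example} it is continuous and $H$-elliptic with constants independent of $t$; the operator associated with $\fra(t,\cdot,\cdot)$ on $H$ is precisely $A(t)$ from \eqref{pH-Operator-diss} by the identification carried out in Proposition~\ref{Thm-gene-operateor-asso} (which is $t$-by-$t$ and uses only properties that hold uniformly).

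Next I would check the structural hypotheses of Theorem~\ref{Multiplicative-perturbation-nonautonomous-Form}. The decomposition \eqref{decomposition} is obtained by taking $\fra_1(t,u,v):=(S(t)(G^*u)'\mid (G^*v)')_{L_2}+\mathcal C_\partial(v)^*FW_R(t)F^*\mathcal C_\partial(u)$ and letting $\fra_2$ be the remaining lower-order terms $(P_1u\mid G(G^*v)')_{L_2}-([(I-GG^*)P_1u]'\mid v)_{L_2}-(P_0u\mid v)_{L_2}$. I must verify: (i) $\fra_1$ is symmetric, continuous and $H$-elliptic uniformly in $t$ — symmetry follows from Assumption~\ref{Assumption}.7 ($W_R(t)=W_R(t)^*$) and selfadjointness of $S(t)$, continuity from Assumption~\ref{Assumption}.5,7 and the Sobolev trace embedding, $H$-ellipticity from $S(t)\ge m_2 I$ together with $W_R(t)\ge 0$ and an absorption of the trace term into $\|(G^*u)'\|_{L^2}^2+\|u\|^2$; (ii) $\fra_1$ is Lipschitz in $t$ — this is where I would use that $t\mapsto S(t,\cdot)$ and $t\mapsto W_R(t)$ are Lipschitz (Assumption~\ref{Assumption}.5,7), giving $|\fra_1(t,u,v)-\fra_1(s,u,v)|\le (L_2\|(G^*u)'\|_{L^2}\|(G^*v)'\|_{L^2}+C\,\mathrm{Lip}(W_R)\|u\|_V\|v\|_V)|t-s|\le L_1'|t-s|\|u\|_V\|v\|_V$; (iii) $\fra_2$ satisfies $|\fra_2(t,u,v)|\le M_2\|u\|_V\|v\|$ with measurability in $t$ — by Lemma~\ref{form-well define} the term $([(I-GG^*)P_1u]'\mid v)_{L_2}$ is bounded by $\kappa_2\|u\|_V\|v\|$, and the other two terms are bounded by $\|P_1\|_\infty\|G\|\,\|u\|_V\|v\|$ and $\|P_0\|_\infty\|u\|\|v\|$ respectively, all with $t$-independent constants (indeed $P_0,P_1$ are $t$-independent so measurability is trivial). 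Finally, $B(t):=\H(t)$ (viewed as a multiplication operator on $H=L^2(0,1;\K^n)$) is selfadjoint because $\H(t,\zeta)^*=\H(t,\zeta)$, uniformly positive with $(\H(t)x\mid x)\ge m_1\|x\|^2$, and Lipschitz in $t$ with constant $L_1$, all by Assumption~\ref{Assumption}.4 — so $B$ meets the standing assumptions of Section~\ref{MR-Hilbert spaces}.

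With these verifications in place, Theorem~\ref{Multiplicative-perturbation-nonautonomous-Form} applies directly: for $x_0\in V$ and $f\in L^2(0,T;H)$ there is a unique $u\in MR_B(2,H)$ solving $\dot u(t)+A(t)\H(t)u(t)=f(t)$ a.e.\ with $\H(0)u(0)=x_0$, and $\H(\cdot)u(\cdot)\in C([0,T];V)$. Unravelling the definitions of $A(t)$ and of $D(A(t))$ in \eqref{pH-Operator-diss}, the abstract identity $A(t)\H(t)u(t)=f(t)$ together with $\H(t)u(t)\in D(A(t))$ is exactly the pointwise statement that $u$ solves \eqref{dissipative system2}--\eqref{dissipative system2-BC2}, i.e.\ the PDE plus the two boundary conditions $F^*\B_\partial(t)(\H(t)u)=-W_R(t)F^*\mathcal C_\partial(\H(t)u)$ and $(I-FF^*)\mathcal C_\partial(\H(t)u)=0$; and $u\in MR_B(2,H)$ means $u\in W^{1,2}(0,T;H)$ with $\H(\cdot)u(\cdot)\in L^2(0,T;V)$, hence in particular $u\in H^1(0,T;H)\cap L^2(0,T;V)$ since $V$-membership is preserved under multiplication by the Lipschitz, uniformly-elliptic matrix $\H(t)$ (here one uses that $G^*\H(t)u\in H^1$ iff $G^*u\in H^1$, which follows from $\H(t,\cdot)\in W^{1,\infty}$ and $G^*G=I$ — this needs the coupling structure, but since $S,W_R$ are allowed to be merely piecewise Lipschitz one may alternatively invoke Corollary~\ref{Cor-Multiplicative-perturbation-nonautonomous-Form}).

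The main obstacle I anticipate is the $t$-uniform continuity and $H$-ellipticity of $\fra_1$ (and of the full form $\fra(t,\cdot,\cdot)$), specifically handling the boundary term $\mathcal C_\partial(v)^*FW_R(t)F^*\mathcal C_\partial(u)$: one must control the point evaluations $(G^*u)(0),(G^*u)(1)$ by the $V$-norm via a trace/interpolation inequality $|(G^*u)(j)|^2\le \varepsilon\|(G^*u)'\|_{L^2}^2+C_\varepsilon\|u\|^2$ uniformly in $u\in V$, and then absorb this into the ellipticity estimate exactly as in the proof of Lemma~\ref{associate form example} while keeping all constants independent of $t$ (this is where Assumption~\ref{Assumption}.7, $W_R(t)\ge 0$ with $\sup_t\|W_R(t)\|<\infty$, is essential). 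Everything else is bookkeeping: the measurability hypotheses are automatic because $P_0,P_1,G,F$ carry no $t$-dependence, and the Lipschitz hypotheses are inherited verbatim from Assumption~\ref{Assumption}.4,5,7.
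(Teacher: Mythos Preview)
Your approach and decomposition $\fra=\fra_1+\fra_2$ coincide with the paper's: apply Theorem~\ref{Multiplicative-perturbation-nonautonomous-Form} with $B(t)=\H(t)$, take $\fra_1$ to be the $S(t)$-principal term plus the $W_R(t)$-boundary term, and let $\fra_2$ collect the remaining $P_0,P_1$ contributions. The paper's proof is just this one sentence; you add verifications, but two of them are not correct as written.

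First, you claim $|(P_1u\mid G(G^*v)')_{L_2}|\le \|P_1\|_\infty\|G\|\,\|u\|_V\|v\|$, but the derivative here is on $v$, not $u$, so Cauchy--Schwarz only gives a bound by $C\|u\|\,\|v\|_V$; indeed the expression is not even defined for $v\in H\setminus V$, so the hypothesis ``$\fra_2:[0,T]\times V\times H\to\K$ with $|\fra_2(t,u,v)|\le M_2\|u\|_V\|v\|$'' of Section~\ref{MR-Hilbert spaces} is not met by this term. (The paper records the same choice of $\fra_2$ without verifying this bound, so this is not a divergence from the paper but a point that both leave open.) Second, to pass from $\H(\cdot)u(\cdot)\in C([0,T];V)$ (which the abstract theorem gives) to $u\in L^2(0,T;V)$ you assert that $\H(t,\cdot)\in W^{1,\infty}$ in $\zeta$ and that $G^*\H(t)u\in H^1\Leftrightarrow G^*u\in H^1$; but Assumption~\ref{Assumption}.4 only imposes Lipschitz continuity in $t$, not in $\zeta$, and even granting $\zeta$-regularity the equivalence would require $\H(t,\zeta)$ to commute with the projection $GG^*$, since the component $(I-GG^*)u$ is merely in $L^2$.
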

\begin{proof} The result follows from Theorem \ref{Multiplicative-perturbation-nonautonomous-Form} for
\[
\fra_1(t,u,v):=(S(t)(G^*u)'\mid (G^*v)')_{L_2}+\B_{\partial}(t)(v)^*FW_R(t)F^*
     \B_{\partial}(t)(u)\]
and
\[\fra_2(t,u,v)=(P_1u\mid G(G^*v)')_{L_2}-([(I-GG^*)P_1u]'\mid v)_{L_2}-(P_0u\mid v)_{L_2}.\]
\end{proof}
%
\subsection{Wave equation with structural damping}
We illustrate our theoretical results of Section \ref{Example: general class} and \ref{Autonomous-general equation}  by means of the one-dimensional wave equation with structural damping along the spatial domain.
We start with the autonomous and homogeneous evolution equation
\begin{equation}\label{wave equation}\rho(\zeta)\frac{\partial^2\omega}{\partial t^2}=\frac{\partial}{\partial\zeta}\Big(E(\zeta)\frac{\partial\omega}{\partial\zeta}(t,\zeta)\Big)
+\frac{\partial}{\partial\zeta}\Big(k(\zeta)\frac{\partial^2\omega}{\partial\zeta\partial t}(t,\zeta)\Big)
\end{equation}
where $\zeta\in[0,1]$ is the spatial variable, $\omega(t,\zeta)$ is the deflection at point $\zeta$ and time $t,$ $\rho(.)$ is the mass density, $T(.)$ is the Young's modulus and $k(.)$ is the damping coefficient. The distributed parameters $T,\rho,k$ are assumed to be of class  $L^{\infty}(0,1)$  and strictly positive with
\[\partial \leq\rho(\zeta),E(\zeta),k(\zeta) \ \ \zeta\ a.e.\text{ for some } \partial>0.\]
We define $x_1:=\rho\frac{\partial\omega}{\partial t}$ and  $x_2:=\frac{\partial\omega}{\partial \zeta}.$ Then
(\ref{wave equation}) can be equivalently  written as
\begin{align*}
\frac{\partial}{\partial t}\left(
                        \begin{array}{cc}
                          x_1  \\
                           x_2 \\
                        \end{array}
                      \right)&=\frac{\partial}{\partial \zeta}\Big[\left(
                        \begin{array}{cc}
                          1  \\
                          0 \\
                        \end{array}
                      \right)k(\zeta)\frac{\partial}{\partial \zeta}\left(
                        \begin{array}{cc}
                          1 & 0 \\
                        \end{array}
                      \right)\left(
                        \begin{array}{cc}
                          1/\rho(\zeta) & 0 \\
                          0  & E(\zeta)   \\
                        \end{array}
                      \right)\left(
                        \begin{array}{cc}
                          x_1  \\
                           x_2 \\
                        \end{array}
                      \right)\\&\hskip 5cm +\left(
                        \begin{array}{cc}
                          E(\zeta)x_2  \\
                           x_1/\rho(\zeta) \\
                        \end{array}
                      \right)\Big]
\\&=\frac{\partial}{\partial \zeta}\Big[\left(
                        \begin{array}{cc}
                          1  \\
                          0 \\
                        \end{array}
                      \right)k(\zeta)\frac{\partial}{\partial \zeta}\left(
                        \begin{array}{cc}
                          1 & 0 \\
                        \end{array}
                      \right)\left(
                        \begin{array}{cc}
                          1/\rho(\zeta) & 0 \\
                          0  & E(\zeta)   \\
                        \end{array}
                      \right)\left(
                        \begin{array}{cc}
                          x_1  \\
                           x_2 \\
                        \end{array}
                      \right)\\& \hskip3cm+\left(
                        \begin{array}{cc}
                          0 & 1 \\
                          1 & 0 \\
                        \end{array}
                      \right)\left(
                        \begin{array}{cc}
                          1/\rho(\zeta) & 0 \\
                          0  & E(\zeta)   \\
                        \end{array}
                      \right)\left(
                        \begin{array}{cc}
                          x_1  \\
                           x_2 \\
                        \end{array}
                      \right)\Big].
\end{align*}
We see  that the damped wave equation (\ref{wave equation}) can be written in the form (\ref{dissipative system2}) with
\[\mathcal H(\zeta)=\left(
                        \begin{array}{cc}
                          \frac{1}{\rho(\zeta)} & 0 \\
                           0 & E(\zeta) \\
                        \end{array}
                      \right),
                      \ P_1(\zeta)=\left(
                        \begin{array}{cc}
                          0& 1 \\
                           1 & 0 \\
                        \end{array}
                      \right),\ G=\left(
                        \begin{array}{cc}
                          1  \\
                          0 \\
                        \end{array}
                      \right),\]
$S(\zeta)=k(\zeta),\ P_0=0,\ f = 0,\ n = 2$ and  $k = 1.$  Furthermore, it is easy to see that the $G,P_0,P_1$ satisfy Assumption \ref{Assumption}. In particular, we have
	\begin{equation}
	 (I - GG^*)P_1
	  = \left( \begin{array}{cc} 0& 0 \\0 &1 \end{array} \right) \left( \begin{array}{cc} 0&1 \\ 1&0 \end{array} \right)
	  = \left( \begin{array}{cc} 0&0 \\ 1&0 \end{array} \right)
	  = \left( \begin{array}{cc}0 &1 \\ 1&0 \end{array} \right) GG^*
	  \nonumber
	\end{equation}
so that equation (\ref{couplung-cond}) is satisfied.
Also note that $\H$ and $S$ are coercive multiplication operators on $L^2(0,1;\K^2)$ and $L^2(0,1;\K)$, respectively. Moreover, remark that the assumptions of Proposition \ref{contraction semigroup example} are satisfied. So far we did not impose any boundary conditions. First consider the essential boundary conditions, choosing $r \in \{ 0, 1, 2\}$ and $F \in \K^{2 \times r}$ such that $FF^* \in \K^{2 \times 2}$ is a projection.
We then set $V := H^1_F(0,1) \times L^2(0,1)$ where
	\begin{equation}
	 H^1_F(0,1)
	  := \left\{ v \in H^1(0,1): (I-FF^*) \left( \begin{array}{c} v(0) \\ v(1) \end{array} \right)=0 \right\}.
	  \nonumber
	\end{equation}
We give some examples.
	\begin{enumerate}
	 \item
	  $r = 0$, then $FF^* = 0$ and this leads to
		\begin{equation}
		 V
		  = \{ v \in L_2(0,1;\K^2): \ \frac{v_1}{\rho} \in H^1(0,1), \ \frac{v_1}{\rho}(0) = \frac{v_1}{\rho}(1) = 0 \}
		  \nonumber
		\end{equation}
	  i.e., Dirichlet boundary conditions $\omega_t(0) = \omega_t(1) = 0$.
	 \item
	  $r = 2$, then $FF^* = I$ and consequently
		\begin{equation}
		 V
		  = \{ v \in L_2(0,1;\K^2): \ \frac{v_1}{\rho} \in H^1(0,1) \}
		  \nonumber
		\end{equation}
	  i.e., no essential boundary conditions.
	 \item
	  $r = 1$, e.g., $F = \frac{1}{2} \left( \begin{array}{c} 1 \\ 1 \end{array} \right)$, then
		\begin{equation}
		 V
		  = \{ v \in L_2(0,1;\K^2): \ \frac{v_1}{\rho} \in H^1(0,1), \ \frac{v_1}{\rho}(0) = \frac{v_1}{\rho}(1) \}
		  \nonumber
		\end{equation}
	  i.e., periodic boundary conditions $\omega_t(0) = \omega_t(1)$.
	\end{enumerate}
Secondly, by choosing $0 \leq W_R = W_R^* \in \K^{r \times r}$ we demand natural boundary conditions
	\begin{equation}
	 F^* \left( \begin{array}{c} (k(.) (\frac{u_1}{\rho})' + E(.) u_2)(1) \\ -(k(.) (\frac{u_1}{\rho})' + E(.) u_2)(0) \end{array} \right)
	  = - W_R F^* \left( \begin{array}{c} \frac{u_1}{\rho}(1) \\ \frac{u_1}{\rho}(0) \end{array} \right)
	  \nonumber
	\end{equation}
which for the original equation (\ref{wave equation}) correspond to the boundary conditions
	\begin{equation}
	 F^* \left( \begin{array}{c} (k(.) \omega_{t\z} + E(.) \omega_\z)(1) \\ -(k(.) \omega_{t\z} + E(.) \omega_\z)(0) \end{array} \right)
	  = - W_R F^* \left( \begin{array}{c} \omega_t(1) \\ \omega_t(0) \end{array} \right).
	  \nonumber
	\end{equation}
For our previous examples this reads
	\begin{enumerate}
	 \item
	  $r = 0$, then we have only essential boundary conditions.
	 \item
	  $r = 2$, then for $W_R = 0$ we obtain Neumann-type boundary conditions
		\begin{equation}
		 (k(.) \omega_{t\z} + E(.) \omega_\z)(0)
		  = (k(.) \omega_{t\z} + E(.) \omega_\z)(1)
		  = 0
		  \nonumber
		\end{equation}
	  and for $0 \not= W_R \geq 0$ Robin-type boundary conditions.
	 \item
	  $r = 1, \ F = \frac{1}{2} \left( \begin{array}{c} 1 \\ 1 \end{array} \right)$, then $W_R \geq 0$ is scalar and the natural boundary condition reads
		\begin{equation}
		 (k(.)\omega_{t\z} + E(.)\omega_\z)(1) - (k(.)\omega_{t\z} + E(.)\omega_\z)(0)
		  = - W_R (\omega_t(1) + \omega_t(0)).
		  \nonumber
		\end{equation}
	\end{enumerate}
For the non-autonomous version of equation (\ref{wave equation}) the results of Section \ref{example-nonautonomous} thus imply the following where we use the notation
	\begin{align}
	 D_{F,W_R,k}
	  &:= \{ u \in H^1_F(0,1) \times L_2(0,1): (k u_1'+u_2) \in H^1(0,1),
	  \nonumber \\
	  &\qquad \left( \begin{array}{cc} (k u_1'+u_2)(0) \\ - (k u_1'+u_2)(1) \end{array} \right) = - W_R \left( \begin{array}{c} u_1(0) \\ u_1(1) \end{array} \right) \}.
	  \nonumber
	\end{align}

\begin{proposition}
Let $p \in (1, \infty)$ and assume that $\rho, T: [0,T] \times [0,1] \rightarrow \R, \ k: [0,1] \rightarrow \R$ are bounded and measurable with $\rho$ and $T$ piecewise Lipschitz continuous in $t \in [0,T]$ and such that
	\begin{equation}
	 \rho(t,\z), T(t,\z), k(\z)
	  \geq \partial
	  > 0,
	  \quad
	  \text{a.e.} \ (t,\z) \in [0,T] \times [0,1].
	  \nonumber
	\end{equation}
Further let $r \in \{0, 1, 2\}$ and $F \in \K^{2 \times r}$ such that $FF^* \in \K^{2 \times 2}$ is a projection and $W_R = W_R^* \geq 0$ a $r \times r$-matrix.
Then for every $(x_1, x_2) \in (L_2(0,1;\K^2),D_{F,W_R,k})_{1/p^*,p}$ and $f \in L^p(0,T;L^2(0,1))$ the problem
	\begin{align}
	 \rho(t,\z) \frac{\partial^2 \omega}{\partial t^2}
	  - \frac{\partial}{\partial \z} \left( T(t,\z) \frac{\partial \omega}{\partial \z}(t,\z) - k(\z) \frac{\partial^2 \omega}{\partial t \partial \z}(t,\z) \right)
	  &= f(t,\z)
	  \nonumber \\
	 (I-FF^*) \left( \begin{array}{c} \omega_t(0) \\ \omega_t(1) \end{array} \right)
	  &= 0
	  \nonumber \\
	 F^* \left( \begin{array}{c} (k(.) \omega_{t\z} + E(t,.) \omega_\z)(1) \\ -(k(.) \omega_{t\z} + E(t,.) \omega_\z)(0) \end{array} \right)
	  &= - W_R F^* \left( \begin{array}{c} \omega_t(1) \\ \omega_t(0) \end{array} \right)
	  \nonumber \\
	 (E\omega_t)(0,.)
	  &= x_1
	  \nonumber \\
	 (E\omega_\z)(0,.)
	  &= x_2
	  \nonumber
	\end{align}
has a solution $\omega$ such that
	\begin{align}
	 (\omega_t, E \omega_\z)
	  &\in W^{1,p}(0,T;L^2(0,1;\C^2))
	  \nonumber \\
	 k(.) \omega_t + E(t,.) \omega_\z
	  &\in L^p(0,T;H^1(0,1;\C^2))
	  \nonumber
	\end{align}
which is unique up to an additive constant $\Delta  \in \K$.
\end{proposition}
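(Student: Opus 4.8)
The plan is to recognise this damped-wave problem as a particular instance of the general class (\ref{dissipative system2})--(\ref{dissipative system2-BC2}) studied in Section \ref{Example: general class}, with the identifications already recorded above: $n=2$, $k=1$, $G=\left(\begin{smallmatrix}1\\0\end{smallmatrix}\right)$, $P_0=0$, $P_1=\left(\begin{smallmatrix}0&1\\1&0\end{smallmatrix}\right)$, $S(\z)=k(\z)$, and $\H(t,\z)=\operatorname{diag}\!\big(1/\rho(t,\z),\,E(t,\z)\big)$, and then to apply the first case of Section \ref{example-nonautonomous}. Since here $S=k$ and $W_R$ do not depend on $t$, the operator $A(t)\equiv A$ of (\ref{pH-Operator-diss})--(\ref{pH-Operator-diss-Dom}) is constant, so we are precisely in the setting of Theorem \ref{Lp-MR-port-H}; as $\rho$ and $E$ are only piecewise Lipschitz in $t$, the statement actually needed is the piecewise-Lipschitz version, i.e.\ Theorem \ref{Lp-max-reg-Hilbert}, applied with $A$ constant (hence strongly measurable and relatively continuous).

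The first thing to check are the hypotheses. That $G,P_0,P_1$ satisfy Assumption \ref{Assumption} --- in particular the coupling condition (\ref{couplung-cond}) --- has already been verified above. The map $t\mapsto\H(t,\cdot)$ is clearly self-adjoint, and, since $0<\partial\le\rho(t,\z),E(t,\z)$ a.e.\ with $\rho$ also bounded above, it is uniformly positive, $\H(t)\ge\min\{\|\rho\|_{\infty}^{-1},\partial\}\,I$ for all $t$. The only point needing a short argument is piecewise Lipschitz continuity of $t\mapsto\H(t,\cdot)$ into $\L(L^2(0,1;\K^2))$: on each interval where $\rho$ is Lipschitz in $t$ so is $1/\rho$, with constant at most $\partial^{-2}$ times that of $\rho$, and likewise for $E$, so $\H$ is piecewise Lipschitz with an explicit constant.

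Next I would set up the abstract problem. By Lemma \ref{associate form example} and Proposition \ref{Thm-gene-operateor-asso} (applied with $P_0=0$ and the present choices) the form (\ref{form-example})--(\ref{form-example-domain}) is continuous and $H$-elliptic, and its associated operator on $H=L^2(0,1;\K^2)$ is the realisation (\ref{pH-Operator-diss})--(\ref{pH-Operator-diss-Dom}), whose domain is exactly $D(A)=D_{F,W_R,k}$; consequently the trace space is $Tr\cong\bigl(L^2(0,1;\K^2),D_{F,W_R,k}\bigr)_{1/p^*,p}$. Theorem \ref{Lp-max-reg-Hilbert} then provides, for every $x_0=(x_1,x_2)\in Tr$ and every $f\in L^p(0,T;L^2(0,1))$, a unique $u\in MR_\H(p,L^2(0,1;\K^2))$ with
\[
\dot u(t)+A\H(t)u(t)=f(t)\quad\text{a.e.\ on }[0,T],\qquad \H(0)u(0)=x_0,
\]
together with $\H(\cdot)u(\cdot)\in C([0,T];Tr)$.

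It then remains to translate this solution back to the wave equation. Writing $\H(\cdot)u(\cdot)=(\omega_t,E\omega_\z)$ componentwise and using $MR_\H(p,H)=\H^{-1}MR(p,H)$, one has $\H u\in L^p(0,T;D(A))\cap W^{1,p}(0,T;L^2(0,1;\K^2))$; unwinding the description of $D(A)=D_{F,W_R,k}$, this is precisely the asserted regularity of $(\omega_t,E\omega_\z)$ and of $k\omega_{t\z}+E\omega_\z$, together with the essential and the natural boundary conditions for a.e.\ $t$, while $\H(\cdot)u(\cdot)\in C([0,T];Tr)$ yields the prescribed initial traces. Finally, the second coordinate of the abstract equation is $\partial_t\omega_\z=\partial_\z\omega_t$, so $\omega_t$ and $\omega_\z$ are the time- and space-derivatives of a single scalar function $\omega$, which is thereby determined up to the additive constant $\Delta\in\K$ arising when $\omega(0,\cdot)$ is recovered from $\omega_\z(0,\cdot)$ by integration; this $\omega$ solves the stated system. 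I expect the only genuine work to be this last bookkeeping --- matching $D(A)$ with $D_{F,W_R,k}$ via Proposition \ref{Thm-gene-operateor-asso} and placing the forcing $f$ correctly in the first-order system so that the reconstruction of $\omega$ is consistent --- since all the maximal-regularity content is already contained in Theorem \ref{Lp-max-reg-Hilbert}.
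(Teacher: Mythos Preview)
Your proposal is correct and follows essentially the same route as the paper: recognise the damped-wave system as an instance of the general class of Section~\ref{Example: general class} with the identifications already recorded, note that $S=k$ and $W_R$ are time-independent so $A(t)\equiv A$, and then apply the $L^p$-maximal regularity result for the multiplicatively perturbed problem. The paper simply cites Theorem~\ref{Lp-MR-port-H} (together with ``the previous considerations''), whereas you invoke Theorem~\ref{Lp-max-reg-Hilbert} directly to accommodate the piecewise (rather than global) Lipschitz continuity of $\H$; this is a slightly more careful citation but the same argument, and your added bookkeeping (matching $D(A)$ with $D_{F,W_R,k}$, recovering $\omega$ from $(\omega_t,\omega_\zeta)$ up to an additive constant) spells out what the paper leaves implicit.
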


\begin{proof}
The result follows from the previous considerations and Theorem \ref{Lp-MR-port-H}.
\end{proof}

\begin{proposition}
Let $p = 2$ and assume that $\rho, E, k: [0,T] \times [0,1] \rightarrow \R$ are bounded and measurable and piecewise Lipschitz continuous in $t \in [0,T]$ and such that
	\begin{equation}
	 \rho(t,\z), E(t,\z), k(t,\z)
	  \geq \delta
	  > 0,
	  \quad
	  \text{a.e.} \ (t,\z) \in [0,T] \times [0,1].
	  \nonumber
	\end{equation}
Further let $r \in \{0, 1, 2\}$ and $F \in \K^{2 \times r}$ such that $FF^* \in \K^{2 \times 2}$ is a projection and $W_R: [0,T] \rightarrow \K^{r \times r}$ piecewise Lipschitz continuous with $W_R(t) = W_R^*(t) \geq 0$ for all $t \in [0,T]$.
Then for every $(x_1, x_2) \in H^1_F(0,1) \times L_2(0,1)$ and $f \in L^2(0,T;L^2(0,1))$ the problem
	\begin{align}
	 \rho(t,\z) \frac{\partial^2 \omega}{\partial t^2}
	  - \frac{\partial}{\partial \z} \left( T(t,\z) \frac{\partial \omega}{\partial \z}(t,\z) - k(t,\z) \frac{\partial^2 \omega}{\partial t \partial \z}(t,\z) \right)
	  &= f(t,\z)
	  \nonumber \\
	 (I-FF^*) \left( \begin{array}{c} \omega_t(0) \\ \omega_t(1) \end{array} \right)
	  &= 0
	  \nonumber \\
	 F^* \left( \begin{array}{c} (k(t,.) \omega_{t\z} + T(t,.) \omega_\z)(1) \\ -(k(t,.) \omega_{t\z} + T(t,.) \omega_\z)(0) \end{array} \right)
	  &= - W_R(t) F^* \left( \begin{array}{c} \omega_t(1) \\ \omega_t(0) \end{array} \right)
	  \nonumber \\
	 (E\omega_t)(0,.)
	  &= x_1
	  \nonumber \\
	 (E\omega_\z)(0,.)
	  &= x_2
	  \nonumber
	\end{align}
has a solution $\omega$ such that
	\begin{align}
	 (\omega_t, E \omega_\z)
	  &\in H^1(0,T;L^2(0,1;\K^2))
	  \nonumber \\
	 k(t,.) \omega_t + E(t,.) \omega_\z
	  &\in L^2(0,T;H^1(0,1;\K^2))
	  \nonumber
	\end{align}
which is unique up to an additive constant $\Delta \in \K$.
\end{proposition}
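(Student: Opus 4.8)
The plan is to recognise the non-autonomous damped wave equation as a particular instance of the port-Hamiltonian system (\ref{dissipative system2})--(\ref{dissipative system2-BC2}) and then to apply Theorem \ref{L2-Mr-port-H}, repeating verbatim the reduction carried out in the autonomous discussion of Subsection \ref{Autonomous-general equation} but now with $t$-dependent coefficients. Concretely I would introduce the first-order state $u=(\rho\,\omega_t,\omega_\z)$, so that $\H(t,\z)=\mathrm{diag}\big(\rho(t,\z)^{-1},E(t,\z)\big)$, $\H(t)u=(\omega_t,E\omega_\z)$, $S(t,\z)=k(t,\z)$, and $P_0=0$, $G$, $P_1$, $n=2$, $k=1$, $F$, $W_R(t)$ are exactly as in Subsection \ref{Autonomous-general equation}, the only difference being that $\H$, $S$ and $W_R$ now depend on $t$. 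Under this substitution the prescribed algebraic relations on the trace turn into (\ref{dissipative system2-BC1})--(\ref{dissipative system2-BC2}), and the equation becomes the abstract Cauchy problem (\ref{Non-aut-pbl-abstract example}).

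Next I would check Assumption \ref{Assumption}. Parts 1--3 and 6 concern only $G$, $P_0$, $P_1$, $F$, are independent of $t$, and were already verified in Subsection \ref{Autonomous-general equation}; in particular the coupling condition (\ref{couplung-cond}) holds because $(I-GG^*)P_1=P_1GG^*$. For parts 4, 5, 7 the standing hypotheses --- $\rho,E,k$ bounded, bounded below by $\delta>0$, piecewise Lipschitz in $t$, and $0\le W_R(\cdot)=W_R(\cdot)^*$ piecewise Lipschitz --- say exactly that $\H$ and $S$ are self-adjoint, uniformly positive and piecewise Lipschitz-continuous as $\L(H)$-valued maps and that $W_R$ is self-adjoint, positive semi-definite and piecewise Lipschitz. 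By the remark at the end of Section \ref{MR-Hilbert spaces}, piecewise Lipschitz regularity of the leading form $\fra_1(t,u,v)=(S(t)(G^*u)'\mid(G^*v)')_{L_2}+\mathcal C_\partial(v)^*FW_R(t)F^*\mathcal C_\partial(u)$ is enough for Theorem \ref{L2-Mr-port-H} to apply, with $\fra_2$ the remaining first-order part.

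With the hypotheses in place and $x_0:=(x_1,x_2)\in H^1_F(0,1)\times L_2(0,1)=V$, Theorem \ref{L2-Mr-port-H} yields a unique $u\in H^1(0,T;H)\cap L^2(0,T;V)$ solving (\ref{Non-aut-pbl-abstract example}). It then remains to translate this back to $\omega$: since $u\in W^{1,2}(0,T;H)$ and $\H$ is (piecewise) Lipschitz, $\H(t)u(t)=(\omega_t,E\omega_\z)\in W^{1,2}(0,T;L^2(0,1;\K^2))$; the condition $u(t)\in V$ together with the description of $D(A(t))$ forces $\omega_t\in L^2(0,T;H^1)$ and the flux $k\,\omega_{t\z}+E\,\omega_\z\in L^2(0,T;H^1(0,1;\K^2))$; and the second row of the abstract equation is precisely $\partial_t\omega_\z=\partial_\z\omega_t$, so $(\omega_t,\omega_\z)$ admits a scalar potential $\omega$, determined by $u$ up to its value $\omega(0,0)=:\Delta\in\K$, whence $\omega$ is unique up to the additive constant $\Delta$. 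The main (and essentially only) obstacle is this bookkeeping: matching the abstract state with the PDE unknowns and checking that the boundary conditions are faithfully encoded in $D(A(t))$ through $\mathcal B_\partial$, $\mathcal C_\partial$, $F$ and $W_R(t)$, and that membership in $\MR_\H(2,H)$ indeed translates into the stated regularity classes; no analytic difficulty arises beyond Theorem \ref{L2-Mr-port-H} itself.
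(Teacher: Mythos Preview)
Your proposal is correct and follows exactly the approach of the paper: the paper's own proof is the one-line statement ``This is a consequence of Theorem \ref{L2-Mr-port-H},'' and what you have written is precisely the unpacking of that sentence --- the reduction to the abstract system via $u=(\rho\omega_t,\omega_\zeta)$, the verification of Assumption \ref{Assumption} (with the piecewise-Lipschitz remark at the end of Section \ref{MR-Hilbert spaces} covering $\fra_1$), and the recovery of $\omega$ from the compatibility relation $\partial_t\omega_\zeta=\partial_\zeta\omega_t$ in the second component, which accounts for the additive constant $\Delta$.
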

\begin{proof}
This is a consequence of Theorem \ref{L2-Mr-port-H}.
\end{proof}

{\small
}
\bigskip
{\em Bj\"orn Augner, Birgit Jacob, Hafida Laasri}: University of wuppertal, Work Group Functional Analysis,
42097 Wuppertal, Germany.
\\ augner@uni-wuppertal.de, jacob@math.uni-wuppertal.de, laasri@uni-wuppertal.de;

\end{document}